\documentclass[letterpaper,12pt]{article}

\usepackage{amsmath}
\usepackage{amssymb}
\usepackage{amsthm}  
\usepackage[margin=0.8in,letterpaper]{geometry}    
\usepackage{cite} 
\usepackage{caption}
\usepackage{float}
\usepackage{tikz}
\usepackage{authblk}
\usepackage[colorlinks=true,linkcolor=blue,citecolor=blue,urlcolor=blue]{hyperref}
\usepackage{parskip}

\theoremstyle{plain}
\newtheorem{theorem}{Theorem}
\newtheorem{corollary}{Corollary}
\newtheorem{lemma}{Lemma}
\newtheorem{proposition}{Proposition}

\theoremstyle{definition}

\newtheorem{example}{Example}

\theoremstyle{remark}

\DeclareRobustCommand{\seqnum}[1]{%
  \ifmmode
    \text{\href{https://oeis.org/#1}{\textcolor{blue}{{\normalfont\ttfamily #1}}}}%
  \else
    \href{https://oeis.org/#1}{\textcolor{blue}{{\normalfont\ttfamily #1}}}%
  \fi
}

\def\tt{{\mathrm t}}

\author{Andreas M.~Hinz$^{a,}$\footnote{Email:  hinz@math.lmu.de}\; \& El-Mehdi Mehiri$^{b,}$\footnote{Corresponding author. Email: elmehdi.mehiri@emse.fr or  mehiri314@gmail.com}}

\title{\textbf{The Weighted Tower of Hanoi: \\ 
Algebraic Structure, Phase Transitions, \\
and Integer Sequences }\thanks{\textcopyright A.M.Hinz, E.-M.Mehiri 2026}
}

\date{2026--05--15}

\begin{document}

\maketitle

\begin{center}
{\small
$^{a}$ Mathematics Department, LMU M\"unchen, Munich, Germany.\\[4pt]
$^{b}$ Mines Saint-Étienne, CMP, Department of Manufacturing Sciences and Logistics,\\ 
F-13120 Gardanne, France.
}
\end{center}

\begin{abstract}
\noindent We develop a unified algebraic theory of the \emph{weighted Tower of Hanoi} with arbitrary
nonnegative symmetric move costs depending on both disc index and pegs. Starting from a general
optimality recurrence with two competing strategies---one largest-disc move (one-LDM) and
two largest-disc moves (two-LDM)---we derive complete matrix formulations for both regimes
and obtain explicit closed forms for the minimal transfer cost.

 \noindent The one-LDM dynamics is governed by a nontrivial linear operator whose spectral decomposition
reveals a fundamental connection with the \emph{Jacobsthal} and \emph{Lichtenberg} sequences,
while the two-LDM dynamics exhibits pure exponential growth. This framework yields exact
solutions for broad classes of weight models, including peg-symmetric, disc-symmetric,
polynomial, geometric, arithmetic, and sequence-induced costs. In particular, choosing
classical integer sequences (Fibonacci, Lucas, Jacobsthal, Pell, Euler, etc.) as disc weights
produces new derived sequences with explicit formulas and recurrences, establishing the Tower
of Hanoi as a \emph{sequence-generating transform}.

\noindent  We further introduce and analyze models with forbidden moves and move-type-dependent weights,
uncovering a   \emph{phase transition} phenomenon in which the optimal
strategy switches from two-LDM behavior for small discs to one-LDM behavior beyond a finite
threshold. Our results provide a comprehensive
algebraic and combinatorial understanding of weighted Hanoi dynamics and expose deep
connections between optimal solutions   and classical integer sequences.
\end{abstract}

\textbf{Keywords.}
Tower of Hanoi; Hanoi graphs; weighted graphs; shortest paths;
integer sequences; Jacobsthal numbers; Lichtenberg sequence; phase transitions.

\textbf{MSC (2020).} 05A19, 11B37, 68R10, 11B39.

\section{Introduction}

The Tower of Hanoi is one of the most celebrated problems in mathematics and theoretical
computer science, serving simultaneously as a paradigmatic example of recursive algorithms,
a testing ground for combinatorial methods, and a rich source of connections with number
theory and discrete structures.  Since its introduction by Lucas in the nineteenth century \cite{Lucas_1883},
the classical three-peg version has been completely solved, which
yields, for a tower of $n$ discs ({\em $n$-tower}) the well-known minimal
number of moves $2^n-1$, which  is from the sequence \seqnum{A000225} in the
\emph{On-Line Encyclopedia of Integer Sequences}\textsuperscript{\textregistered} (OEIS) \cite{OEIS}.  
For a comprehensive treatment of the history and the mathematical theory of the Tower of Hanoi and its many variants, we refer to the monograph of Hinz, Klav\v{z}ar,  and Petr \cite{Hinz_2018}.

Among the Tower of Hanoi variants, the \emph{weighted Tower of Hanoi} provides a natural   framework
for modeling nonuniform transfer costs \cite{Mehiri_2024}.  In this setting, each move of a disc is assigned a
cost that depends on the disc itself and on the pegs involved in the move, i.e.~the {\em move type}.  

In this work we develop a comprehensive algebraic framework for the weighted three-peg Tower
of Hanoi with arbitrary nonnegative \emph{symmetric} move costs, where symmetry means that
the cost of moving a disc between any two pegs may depend on the disc itself and on the pair of pegs involved, but not on the direction of the move.  Our starting point is a general optimality
recurrence that decomposes every optimal solution into two competing strategies at each
level: a \emph{one largest-disc move} strategy (one-LDM), in which the largest disc moves once,
and a \emph{two largest-disc move} strategy (two-LDM), in which it moves twice.  This
dichotomy leads to two fundamentally different dynamical regimes.

We show that the one-LDM regime is governed by a nontrivial linear operator whose spectral
structure gives rise to deep connections with classical integer sequences, in particular the
Jacobsthal and Lichtenberg sequences.  In contrast, the two-LDM regime exhibits pure
exponential growth and complete decoupling between pegs.  By exploiting this decomposition,
we derive explicit closed forms for the minimal cost of transferring a tower under broad
classes of weight models, including peg-symmetric, disc-symmetric, polynomial, geometric,
arithmetic, and sequence-induced costs.

Another focus of this paper is the study of a phase-transition phenomenon in weighted Hanoi dynamics. For a natural family of nonuniform move-type weights, we show that an initial preference for the two-LDM strategy persists only up to a finite threshold, after which the one-LDM strategy becomes dominant. This reveals a structural mechanism governing several weighted Hanoi problems and suggests broader questions about transition behavior in more general weight models.

Beyond optimal algorithms, our analysis exposes a powerful and previously unnoticed role of
the Tower of Hanoi as a \emph{sequence-generating transform}.  When classical integer
sequences such as the Fibonacci, Lucas, Jacobsthal, or Pell numbers are used as disc weights,
the induced minimal cost sequences admit explicit formulas and satisfy predictable linear
recurrences.  This establishes systematic bridges between combinatorial optimization on
graphs and the theory of integer sequences.

The paper is organized as follows.  In Section~\ref{sec:introduction} we derive the fundamental optimality
recurrence for the weighted Tower of Hanoi.  Sections~\ref{sec:MatrixformulationOneLdm} and~\ref{sec:MatrixformulationtwoLdm} develop the algebraic
structures of the one-LDM and two-LDM regimes.  In Section~\ref{sec:models} we analyze a broad collection of
weight models and their induced integer sequences.  Finally, in Section~\ref{sec:outlook} we discuss perspectives for future work.

\section{General Recurrence}\label{sec:introduction}

 Throughout the paper $n\in\mathbb{N}_0$, if not otherwise stated, discs are labelled by positive integers according to increasing size, and $T:= \{0,1,2\}$ represents the set of pegs. For $i\in T$, the {\em perfect state} with $n$ discs on peg~$i$ is denoted by $i^n$.

For a fixed triple $\{i,j,k\} = T$ and any $n$, we define
\begin{itemize}
    \item $w_{n,k} \in [0,\infty[$ as the cost of a single move of disc $n+1$
    between pegs $i$ and $j$ (so that $k$ is the so-called {\em idle peg} of this move);
    \item $d_{n,k}$ as the minimal total cost (weighted path length in the corresponding {\em Hanoi graph}, whose vertices are the states and with the edges representing individual disc moves) of moving an $n$-tower 
    from state $i^n$ to state $j^n$, respecting the cost of every move according to $w$.
\end{itemize}

The fundamental recurrence is

\begin{theorem}
\label{thm:weighted-hanoi}
Let $\{i,j,k\} = T$. For all $n$, the minimal cost $d_{n+1,k}$ of moving a tower
of $n+1$ discs from $i^{n+1}$ to $j^{n+1}$ satisfies
\begin{equation}\label{eqn:minimum}
   d_{n+1,k} 
   = 
   \min\left\{
       d_{n,i} + d_{n,j} + w_{n,k}, 
       3d_{n,k} + w_{n,i} + w_{n,j}
   \right\},
\end{equation}
with initial condition $d_{0,k}=0$.
\end{theorem}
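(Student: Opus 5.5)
The plan is to prove the two inequalities ``$\le$'' and ``$\ge$'' separately. Throughout, I use that moves of the largest disc $n+1$ never constrain the smaller discs. Hence the cost of any path in the Hanoi graph on $n+1$ discs splits into the costs of the moves of disc $n+1$ and the costs of the moves of discs $1,\dots,n$.

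For ``$\le$'' I exhibit two explicit solutions. For the one-LDM strategy:
\begin{itemize}
\item transfer the $n$-tower from $i$ to $k$ at cost $d_{n,j}$;
\item move disc $n+1$ from $i$ to $j$ at cost $w_{n,k}$;
\item transfer the $n$-tower from $k$ to $j$ at cost $d_{n,i}$.
\end{itemize}
For the two-LDM strategy:
\begin{itemize}
\item move the $n$-tower $i\to j$ at cost $d_{n,k}$;
\item move disc $n+1$ from $i$ to $k$ at cost $w_{n,j}$;
\item move the $n$-tower $j\to i$ at cost $d_{n,k}$, using the symmetry of costs;
\item move disc $n+1$ from $k$ to $j$ at cost $w_{n,i}$;
\item move the $n$-tower $i\to j$ at cost $d_{n,k}$.
\end{itemize}
This gives $3d_{n,k}+w_{n,i}+w_{n,j}$. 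Here I will note that $d_{n,k}$ is well defined independently of direction, since reversing a path preserves cost by symmetry, and independently of which ordered pair is meant once the idle peg $k$ is fixed.

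For ``$\ge$'' I take an optimal path from $i^{n+1}$ to $j^{n+1}$ and consider the moves of disc $n+1$. Whenever disc $n+1$ moves from peg $a$ to peg $b$, all $n$ smaller discs must lie on the third peg, so the state is a perfect state of the $n$-tower. Since weights are nonnegative, I may discard any moves of disc $n+1$ that return it to a peg it already occupied; more carefully, I argue on the sequence of pegs visited by disc $n+1$, which must start at $i$ and end at $j$. If disc $n+1$ moves exactly once, the path decomposes into an $n$-tower transfer $i^n\to k^n$, the move $i\to j$, and a transfer $k^n\to j^n$. By minimality of each segment the cost is at least $d_{n,j}+w_{n,k}+d_{n,i}$.

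If disc $n+1$ moves at least twice, I want the bound $3d_{n,k}+w_{n,i}+w_{n,j}$. This is the main obstacle, because longer peg sequences such as $i\to k\to i\to j$ must be ruled out or bounded. First I handle a general peg sequence $i=p_0,p_1,\dots,p_m=j$ with $m\ge 2$. Between consecutive largest-disc moves the small tower travels between perfect states on the respective idle pegs. For instance, for $m=2$ via $k$, the pieces are $i^n\to j^n$, then $j^n\to i^n$, then $i^n\to j^n$, each of cost at least $d_{n,k}$, plus $w_{n,j}+w_{n,i}$. For $m\ge 3$ I then show the cost is at least that of either the one-move or the two-move bound, by shortcutting. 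If disc $n+1$ returns to a peg it already occupied, I use nonnegativity and the triangle inequality for shortest path distances in the $n$-disc Hanoi graph: the segment between two visits of disc $n+1$ to the same peg can be replaced by a direct small-tower transfer of no larger cost, because the intermediate large-disc moves only add nonnegative weight. Iterating reduces to a simple peg sequence, of length $1$ or $2$ from $i$ to $j$ on three pegs, which gives the two cases above. A simple-sequence argument on three pegs plus induction on the number of largest-disc moves should make this rigorous. The initial condition $d_{0,k}=0$ is trivial.
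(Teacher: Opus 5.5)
Your proof is correct and follows essentially the same route as the paper. Both split on whether disc $n+1$ moves once or twice, and both eliminate longer peg sequences by cutting out excursions of the largest disc, which is legal because disc $n+1$ never obstructs the smaller discs. Your version is slightly more careful than the paper's in three ways: you separate the two inequalities, you make explicit that symmetry of the weights is needed to price the middle transfer $j^n\to i^n$ at $d_{n,k}$, and you phrase the shortcut as ``no larger cost'' rather than ``contradicting optimality'', which is the right formulation when some weights vanish.
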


\begin{proof}
Consider an optimal solution moving
$n+1$ discs from $i^{n+1}$ to $j^{n+1}$. In such a solution, the largest disc $n+1$
can move either once or twice, but not more often (otherwise it would necessarily revisit a peg such that its intermediate moves could be left out, contradicting optimality).

\emph{Case 1 (one largest-disc move).}
The solution has the usual form:
\begin{enumerate}
    \item move the $n$ smaller discs from $i^n$ to $k^n$ using peg $j$ as auxiliary,
    \item move disc $n+1$ from peg $i$ to peg $j$,
    \item move the $n$ smaller discs from $k^n$ to $j^n$ using peg $i$ as auxiliary.
\end{enumerate}
By definition, the first subproblem has cost $d_{n,j}$, the second has cost $w_{n,k}$,
and the third subproblem has cost $d_{n,i}$. Hence the total cost in this case is $d_{n,i} + d_{n,j} + w_{n,k}$.

\emph{Case 2 (two largest-disc moves).}
In this case, the optimal solution has the form:
\begin{enumerate}
    \item move the $n$ smaller discs from $i^n$ to $j^n$ using peg $k$ as auxiliary,
    \item move disc $n+1$ from peg $i$ to peg $k$,
    \item move the $n$ smaller discs from $j^n$ back to $i^n$ using peg $k$ as auxiliary,
    \item move disc $n+1$ from peg $k$ to peg $j$,
    \item move the $n$ smaller discs from $i^n$ to $j^n$ using peg $k$ as auxiliary.
\end{enumerate}
Here each of the three subproblems with $n$ discs has cost $d_{n,k}$, while the two
moves of the largest disc have costs $w_{n,j}$ and $w_{n,i}$, respectively. The total
cost is therefore $3d_{n,k} + w_{n,i} + w_{n,j}$.

Since an optimal solution must realize one of these two patterns, the minimal
cost $d_{n+1,k}$ is the minimum of these two expressions, which gives
\eqref{eqn:minimum}. The initial condition $d_{0,k}=0$ is immediate. This completes
the proof.
\end{proof}

The dichotomy appearing in Theorem~\ref{thm:weighted-hanoi} is closely related to the structure 
of shortest paths  in Hanoi graphs. Hinz~\cite{Hinz_1992}
studied shortest paths between regular states of the Tower of Hanoi and
provided one of the first systematic treatments of the metric structure of
these graphs. Romik~\cite{Romik_2006} later gave a finite-automaton approach
to shortest paths in Hanoi graphs, making explicit the possible recursive
structures of optimal paths. Later, Aumann, G\"otz, Hinz, and
Petr~\cite{Aumann_2014} investigated the number of moves of the
largest disc in shortest paths, a question that is directly connected with
the distinction, used here, between one-LDM and
two-LDM strategies. In the present weighted setting, this
classical structural alternative becomes the two-branch recurrence~\eqref{eqn:minimum}.

Special cases of Theorem~\ref{thm:weighted-hanoi} are
\begin{itemize}
\item classical Tower of Hanoi (Lucas \cite{Lucas_1883}): $w_{n,k}=1$ ,
\item heavy discs (Fried \cite{Fried_2025}, Hinz and Parisse \cite{Hinz_2025}): $w_{n,0}=1=w_{n,2}$ , $w_{n,1}=2$ ,
\item nonmassive discs (Mehiri and Belbachir \cite{Mehiri_2024}): $w_{n,k}=w_{k}$ .
\end{itemize}

In all these cases, the minimum in $\eqref{eqn:minimum}$ is attained by the first entry, i.e.~for one move of disc~$n+1$. So $\eqref{eqn:minimum}$ reduces to
\begin{equation}\label{eqn:minimumonemove}
d_{n+1,k}=d_{n,i}+d_{n,j}+w_{n,k} .
\end{equation}

\section{The one-LDM recurrence}\label{sec:MatrixformulationOneLdm}

In this section we consider the recurrence relation \eqref{eqn:minimumonemove}, corresponding
to the case where the optimal strategy uses exactly one largest-disc move (one LDM) at
each level, and which is purely linear in $(d_{n,0},d_{n,1},d_{n,2})$.

\begin{lemma}
\label{lem:matrix-one-ldm}
Suppose that for all $n$ and all $k\in T$ the one-LDM strategy is
optimal.
Define the following two vectors and matrix, 
$$
d_n = \begin{pmatrix} d_{n,0}\\ d_{n,1}\\ d_{n,2} \end{pmatrix},
\qquad
w_n = \begin{pmatrix} w_{n,0}\\ w_{n,1}\\ w_{n,2} \end{pmatrix},
\qquad  
\mathbf{A}
=
\begin{pmatrix}
0 & 1 & 1\\
1 & 0 & 1\\
1 & 1 & 0
\end{pmatrix}.
$$
Then we have the vector recurrence
\begin{equation}\label{eq:matrix-rec}
    d_{n+1} = \mathbf{A} d_n + w_n.
\end{equation}

\end{lemma}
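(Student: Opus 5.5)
The plan is to read the one-LDM hypothesis through Theorem~\ref{thm:weighted-hanoi} and then simply rewrite the three resulting scalar equations as one vector identity. Since the one-LDM strategy is assumed optimal for every $n$ and every $k\in T$, the minimum in \eqref{eqn:minimum} is attained by its first entry. Hence \eqref{eqn:minimumonemove}, $d_{n+1,k}=d_{n,i}+d_{n,j}+w_{n,k}$, holds for every $n$ and for each choice of idle peg $k$, where $\{i,j\}=T\setminus\{k\}$. The right-hand side depends only on the unordered pair $\{i,j\}$, so the labelling of source and target is irrelevant.

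First I will write out the three instances of this identity explicitly:
\begin{align*}
d_{n+1,0}&=d_{n,1}+d_{n,2}+w_{n,0},\\
d_{n+1,1}&=d_{n,0}+d_{n,2}+w_{n,1},\\
d_{n+1,2}&=d_{n,0}+d_{n,1}+w_{n,2}.
\end{align*}
Next I will check that the $k$-th row of $\mathbf{A}$ has a $0$ in position $k$ and $1$ elsewhere. This means $(\mathbf{A}d_n)_k=\sum_{\ell\neq k}d_{n,\ell}=d_{n,i}+d_{n,j}$. Comparing componentwise gives \eqref{eq:matrix-rec}, that is, $d_{n+1}=\mathbf{A}d_n+w_n$, valid for all $n\in\mathbb{N}_0$ with $d_0=0$.

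There is no real obstacle here. The only point needing care is that the hypothesis must be applied for all three values of $k$ simultaneously at each level $n$, since each coordinate of $d_{n+1}$ uses a different instance of \eqref{eqn:minimum}. If the hypothesis held for only some $k$, the vector equation would fail in the remaining coordinates.
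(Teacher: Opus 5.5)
Your proposal is correct and matches the paper's approach. The paper gives no separate proof because the lemma is immediate: under the one-LDM hypothesis, \eqref{eqn:minimum} reduces to \eqref{eqn:minimumonemove} for each idle peg $k$, and these three scalar identities are exactly the rows of $d_{n+1}=\mathbf{A}d_n+w_n$.
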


To solve \eqref{eq:matrix-rec} we need the following

\begin{lemma}
\label{lem:A-power}
For all $n$,
\begin{equation*}\label{eq:A-power-J}
    \mathbf{A}^n
    =
    \begin{pmatrix}
        \widetilde{J}_n & J_n & J_n\\
        J_n & \widetilde{J}_n & J_n\\
        J_n & J_n & \widetilde{J}_n
    \end{pmatrix},
\end{equation*}
where $(J_n)_{n\in\mathbb{N}_0}$ is the \emph{Jacobsthal sequence} \textnormal{(\seqnum{A001045})}, given by $J_n = \frac{1}{3}\bigl(2^n - (-1)^n\bigr)$,  and $\widetilde{J}_n$ is its sequence of forward differences
\textnormal{(\seqnum{A078008})},
$$
\widetilde{J}_n = J_{n+1} - J_n = \frac{1}{3}\bigl(2^n + 2(-1)^n\bigr)=J_n+(-1)^n.
$$
Equivalently,
\begin{equation*}\label{eq:A-power-compact}
    \mathbf{A}^n
    =
    \mathbf{J}_n + (-1)^n \mathbf{I},
\end{equation*}
where $\mathbf{J}_n$ is the constant $3\times 3$ matrix with every entry equal to $J_n$,
and $\mathbf{I}$ is the $3\times 3$ identity matrix.
\end{lemma}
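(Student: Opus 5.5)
The plan is to write $\mathbf{A}=\mathbf{E}-\mathbf{I}$, where $\mathbf{E}$ is the $3\times3$ all-ones matrix. The two summands commute, and $\mathbf{E}^m=3^{m-1}\mathbf{E}$ for $m\ge1$, so a binomial expansion gives a closed form directly. As an alternative I would run an induction on $n$ using the compact form $\mathbf{A}^n=\mathbf{J}_n+(-1)^n\mathbf{I}$. I would carry out the induction, since it is shortest, and mention the spectral viewpoint only as a check.

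\emph{Base case.} For $n=0$ we have $J_0=0$, so $\mathbf{J}_0=\mathbf{0}$ and $\mathbf{A}^0=\mathbf{I}$, as required. For $n=1$ we have $J_1=1$, so $\mathbf{J}_1=\mathbf{E}$ and $\mathbf{E}-\mathbf{I}=\mathbf{A}$.

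\emph{Inductive step.} Compute
\[
\mathbf{A}^{n+1}=(\mathbf{J}_n+(-1)^n\mathbf{I})(\mathbf{E}-\mathbf{I}).
\]
Since $\mathbf{J}_n=J_n\mathbf{E}$ and $\mathbf{E}^2=3\mathbf{E}$, this equals
\[
3J_n\mathbf{E}-J_n\mathbf{E}+(-1)^n\mathbf{E}-(-1)^n\mathbf{I}
=\bigl(2J_n+(-1)^n\bigr)\mathbf{E}+(-1)^{n+1}\mathbf{I}.
\]
It therefore suffices to verify $J_{n+1}=2J_n+(-1)^n$. From the closed form,
\[
\tfrac13\bigl(2^{n+1}-2(-1)^n\bigr)+(-1)^n=\tfrac13\bigl(2^{n+1}+(-1)^n\bigr)=\tfrac13\bigl(2^{n+1}-(-1)^{n+1}\bigr)=J_{n+1}.
\]

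\emph{Entrywise form.} The compact form $\mathbf{J}_n+(-1)^n\mathbf{I}$ has off-diagonal entries $J_n$ and diagonal entries $J_n+(-1)^n$. It remains to confirm the stated identities for $\widetilde J_n$:
\[
J_{n+1}-J_n=\tfrac13\bigl(2^{n+1}-2^n+(-1)^n+(-1)^n\bigr)=\tfrac13\bigl(2^n+2(-1)^n\bigr)=J_n+(-1)^n .
\]

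\emph{Cross-check.} The eigenvalues of $\mathbf{A}$ are $2$ on $(1,1,1)^\top$ and $-1$ with multiplicity two on its orthogonal complement. Using the projection $\tfrac13\mathbf{E}$, this gives $\mathbf{A}^n=\tfrac{2^n}{3}\mathbf{E}+(-1)^n(\mathbf{I}-\tfrac13\mathbf{E})$, which is the same formula.

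There is no real obstacle here. The only step needing care is the Jacobsthal identity $J_{n+1}=2J_n+(-1)^n$ in the inductive step, including the sign bookkeeping with $(-1)^n$.
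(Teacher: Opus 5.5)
Your proof is correct and is essentially the paper's argument: the paper also inducts on $n$ by computing $\mathbf{A}^{n+1}=\mathbf{A}^n\mathbf{A}$. The only difference is bookkeeping. You work with the compact form $J_n\mathbf{E}+(-1)^n\mathbf{I}$ and the single identity $J_{n+1}=2J_n+(-1)^n$, whereas the paper uses the entrywise identities $J_{n+1}=J_n+\widetilde{J}_n$ and $\widetilde{J}_{n+1}=2J_n$, which are equivalent to yours.
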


\begin{proof}
This is a straightforward induction. In the induction step calculate $ \mathbf{A}^{n+1}= \mathbf{A}^n\cdot \mathbf{A}$ and
use $J_0=0$, $\widetilde{J}_0=1$,
$J_{n+1}=J_n+\widetilde{J}_n$, and $\widetilde{J}_{n+1}=2J_n$.
\end{proof}

We now solve the linear recurrence \eqref{eq:matrix-rec} in closed form.

\begin{proposition}
\label{prop:solution-linear}
Let $(d_n)_{n\in\mathbb{N}_0}$ be defined by \eqref{eq:matrix-rec}. 
Then for all $n$ we have
\begin{equation}\label{eq:solution-linear}
    d_n
    = \mathbf{A}^n d_0+\sum_{\nu=0}^{n-1} \mathbf{A}^{\nu} w_{n-\nu-1}.
\end{equation}
\end{proposition}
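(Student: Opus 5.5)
We argue by induction on $n$, using only the recurrence \eqref{eq:matrix-rec} from Lemma~\ref{lem:matrix-one-ldm}. The explicit form of $\mathbf{A}^n$ from Lemma~\ref{lem:A-power} is not needed here; it will only enter when \eqref{eq:solution-linear} is made explicit afterwards. The formula is the discrete variation-of-constants formula for an affine recurrence with constant matrix, so the plan is to verify it directly.

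For the base case $n=0$, the sum in \eqref{eq:solution-linear} is empty and $\mathbf{A}^0=\mathbf{I}$, so the right-hand side reduces to $d_0$. (With $d_0=0$ the first term vanishes, but we keep it general.)

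For the induction step, assume \eqref{eq:solution-linear} holds for some $n$ and apply \eqref{eq:matrix-rec}:
\[
d_{n+1}=\mathbf{A}d_n+w_n=\mathbf{A}^{n+1}d_0+\sum_{\nu=0}^{n-1}\mathbf{A}^{\nu+1}w_{n-\nu-1}+w_n .
\]
Shift the index by $\mu=\nu+1$, so that $\mathbf{A}^{\nu+1}w_{n-\nu-1}=\mathbf{A}^{\mu}w_{(n+1)-\mu-1}$ with $\mu$ running from $1$ to $n$. The remaining term $w_n$ is exactly the $\mu=0$ summand $\mathbf{A}^0w_{(n+1)-0-1}$. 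Together these give $\sum_{\mu=0}^{n}\mathbf{A}^{\mu}w_{(n+1)-\mu-1}$, which is \eqref{eq:solution-linear} for $n+1$.

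There is no real obstacle; the only point needing care is the index bookkeeping in the reindexing, namely checking that the subscript $n-\nu-1$ becomes $(n+1)-\mu-1$ and that $w_n$ fills the $\mu=0$ slot. Alternatively, one could unroll the recurrence directly: write $\mathbf{A}^{n-m}d_m-\mathbf{A}^{n-m-1}d_{m+1}=-\mathbf{A}^{n-m-1}w_m$ and telescope over $m=0,\dots,n-1$. This gives the same formula after substituting $\nu=n-m-1$.
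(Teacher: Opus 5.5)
Your proof is correct and follows the same route as the paper, which simply states that the proof is by induction on $n$; your base case and the reindexing $\mu=\nu+1$, with $w_n$ filling the $\mu=0$ slot, supply exactly the details the paper leaves out. Your telescoping alternative is also valid.
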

\noindent The {\em proof} is again by induction on $n$.

If the one-LDM regime is optimal from the very beginning, as is often the case, then $d_0=(0,0,0)^\tt$ and the first summand in \eqref{eq:solution-linear} disappears.

\section{The two-LDM recurrence}\label{sec:MatrixformulationtwoLdm}

In Theorem~\ref{thm:weighted-hanoi}, the second branch of the recurrence corresponds
to solutions in which the largest disc moves \emph{twice} at level $n$, yielding
\begin{equation}\label{eq:minimumtwomoves}
d_{n+1,k}
=
3 d_{n,k} + w_{n,i} + w_{n,j}.
\end{equation}
Unlike the one-LDM regime, this system contains no cross-coupling between the three
coordinates. In matrix form, \eqref{eq:minimumtwomoves} is equivalent to the linear recurrence
\begin{equation}\label{eq:two-ldm-matrix}
    d_{n+1}
    =
    3 \mathbf{I}  d_n + b_n,
\end{equation}
where $b_n=|w_n| (1,1,1)^\tt - w_n$, $|w_n|=w_{n,0}+w_{n,1}+w_{n,2}$.

As before, iteration yields:

\begin{proposition}
\label{prop:two-ldm-solution}
Let $(d_n)_{n\in\mathbb{N}_0}$ be defined by \eqref{eq:two-ldm-matrix}. 
Then for all $n$ we have
\begin{equation*}\label{eq:two-ldm-solution}
    d_n
    =
    3^nd_0+\sum_{\nu=0}^{n-1}
        3^\nu  b_{ n-1-\nu}.
\end{equation*}
\end{proposition}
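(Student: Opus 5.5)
The plan is a straightforward induction on $n$, mirroring the argument for Proposition~\ref{prop:solution-linear}. The only difference is that the matrix $\mathbf{A}$ is replaced by the scalar matrix $3\mathbf{I}$. Its powers are trivial, $(3\mathbf{I})^\nu = 3^\nu\mathbf{I}$, so no analogue of Lemma~\ref{lem:A-power} is needed.

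\emph{Base case.} For $n=0$ the sum over $\nu$ is empty, and the right-hand side reduces to $3^0 d_0 = d_0$, as required.

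\emph{Induction step.} Assume the formula holds for some $n$. Apply \eqref{eq:two-ldm-matrix} and substitute the hypothesis:
\begin{equation*}
d_{n+1} = 3 d_n + b_n = 3^{n+1} d_0 + \sum_{\nu=0}^{n-1} 3^{\nu+1} b_{n-1-\nu} + b_n .
\end{equation*}
Now shift the summation index with $\mu=\nu+1$. The sum becomes
\begin{equation*}
\sum_{\mu=1}^{n} 3^{\mu} b_{(n+1)-1-\mu},
\end{equation*}
and the extra term $b_n$ is exactly the missing $\mu=0$ summand $3^0 b_{(n+1)-1-0}$. Together these give the claimed formula for $n+1$.

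There is no real obstacle. The only care needed is in the index bookkeeping, namely checking that $b_n$ matches the $\mu=0$ term. The argument uses only linearity and never uses the particular form of $b_n = |w_n|(1,1,1)^\tt - w_n$. Alternatively, the formula can be read off directly by unrolling the recurrence $n$ times.
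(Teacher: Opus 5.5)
Your proposal is correct and takes essentially the paper's approach: the paper just says the formula follows by iterating \eqref{eq:two-ldm-matrix}, as was done for Proposition~\ref{prop:solution-linear}, whose proof is the same induction on $n$ that you carry out. Your index shift $\mu=\nu+1$, with $b_n$ supplying the missing $\mu=0$ term, is exactly the bookkeeping needed.
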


\section{Weight Models}\label{sec:models}

The main differences in largest disc behavior occur for weights depending on (massive) discs or pegs or both.

\subsection{Nonmassive Discs}\label{sec:non-massive}
In this section we assume that the weight does not depend on the disc index, i.e. $w_{n,k}=w_{0,k}=:w_k$ for all $n$ and for all $k\in T$.

\begin{theorem}
\label{thm:non-massive}
Let $w=(w_0,w_1,w_2)^{\mathrm t}$ be independent of $n$, and suppose that at each level
the one-LDM strategy is optimal, so that
\begin{eqnarray*}
    d_0=(0,0,0)^{\mathrm t},\qquad d_{n+1} = \mathbf{A}d_n + w.
\end{eqnarray*}
Then, for all $n$,
\begin{equation}\label{eq:solution-independent}
    d_n
    =
    |w| \ell_{n-1} (1,1,1)^{\mathrm t} + c_n w,
\end{equation}
where $\ell_n$ is defined  (also for $n=-1$) by $\ell_n = \displaystyle{\sum_{\nu=0}^{n}} J_\nu$,  forming the \emph{Lichtenberg sequence} \textnormal{(\seqnum{A000975})} \cite{Hinz_2017},
and $(c_n)_{n\in\mathbb{N}_0}$ is the \emph{parity sequence}
\textnormal{(\seqnum{A000035})} given by $c_n = n \bmod 2$.
\end{theorem}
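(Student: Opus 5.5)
We apply Proposition~\ref{prop:solution-linear} with $d_0=(0,0,0)^{\mathrm t}$ and $w_{n-\nu-1}=w$ for every $\nu$. This gives
\[
d_n=\sum_{\nu=0}^{n-1}\mathbf{A}^{\nu}w=\Bigl(\sum_{\nu=0}^{n-1}\mathbf{A}^{\nu}\Bigr)w .
\]
We then insert the compact form of Lemma~\ref{lem:A-power}, namely $\mathbf{A}^\nu=\mathbf{J}_\nu+(-1)^\nu\mathbf{I}$. The sum splits into two pieces,
\[
\sum_{\nu=0}^{n-1}\mathbf{J}_\nu=\mathbf{L}_{n-1}
\qquad\text{and}\qquad
\Bigl(\sum_{\nu=0}^{n-1}(-1)^\nu\Bigr)\mathbf{I}.
\]
Here $\mathbf{L}_{n-1}$ is the constant matrix all of whose entries equal $\ell_{n-1}=\sum_{\nu=0}^{n-1}J_\nu$.

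Next we evaluate each piece applied to $w$. A constant matrix with entries $\ell_{n-1}$ sends $w$ to $\ell_{n-1}(w_0+w_1+w_2)(1,1,1)^{\mathrm t}$, which is $|w|\,\ell_{n-1}(1,1,1)^{\mathrm t}$. The alternating sum $\sum_{\nu=0}^{n-1}(-1)^\nu$ equals $1$ for odd $n$ and $0$ for even $n$, so it is exactly $c_n=n\bmod 2$. Combining the two pieces yields \eqref{eq:solution-independent}.

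The only delicate point is the boundary case $n=0$. There the sum is empty and $d_0=0$, so we need $\ell_{-1}=0$ (the empty sum, consistent with the convention for $n=-1$) and $c_0=0$. Both hold, so the formula is valid for all $n\ge 0$ without a separate argument.

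As a cross-check, one could instead proceed by direct induction. This uses $\mathbf{A}(1,1,1)^{\mathrm t}=2(1,1,1)^{\mathrm t}$, $\mathbf{A}w=|w|(1,1,1)^{\mathrm t}-w$, and the identity $\ell_n=2\ell_{n-1}+c_n$. That identity in turn follows from $J_{n}=2J_{n-1}+(-1)^{n-1}$, i.e. $J_{n+1}=2J_n+(-1)^n$. I do not expect a real obstacle beyond this bookkeeping.
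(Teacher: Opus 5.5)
Your proposal is correct and takes the same route as the paper's proof. Both apply Proposition~\ref{prop:solution-linear} with $d_0=0$, substitute $\mathbf{A}^\nu=\mathbf{J}_\nu+(-1)^\nu\mathbf{I}$, and evaluate the two resulting sums as $|w|\ell_{n-1}(1,1,1)^{\mathrm t}$ and $c_n w$; your remarks on the $n=0$ boundary case and the inductive cross-check via $\ell_n=2\ell_{n-1}+c_n$ are correct extras the paper does not spell out.
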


\begin{proof}
By Proposition~\ref{prop:solution-linear} and the assumption $w_{n,k}=w_k$ for all $n$ and $k$ we
have
$$
d_n
= \sum_{\nu=0}^{n-1} \mathbf{A}^{\nu} w.
$$
Using Lemma~\ref{lem:A-power}, we can write
$$
\mathbf{A}^{\nu}
= \mathbf{J}_\nu + (-1)^{\nu}\mathbf{I}.
$$
Hence
$$
d_n = \sum_{\nu=0}^{n-1}\bigl(\mathbf{J}_\nu + (-1)^{\nu}\mathbf{I}\bigr)w = \sum_{\nu=0}^{n-1}\mathbf{J}_\nu w
   + \sum_{\nu=0}^{n-1}(-1)^{\nu}\mathbf{I} w.
$$

For the first sum, note that $\mathbf{J}_\nu w$ has all three entries equal to
$|w|J_\nu$; thus
$$
\sum_{\nu=0}^{n-1}\mathbf{J}_\nu w
=|w| \left(\sum_{\nu=0}^{n-1} J_\nu\right) (1,1,1)^{\mathrm t}
=|w|  \ell_{n-1} (1,1,1)^{\mathrm t}.
$$
The second sum is 
$$
\left(\sum_{\nu=0}^{n-1}(-1)^{\nu}\right) w = c_nw.
$$
Combining the two contributions, we obtain \eqref{eq:solution-independent}.
\end{proof}

\begin{example}[Classical Tower of Hanoi]\label{ex:classical}
For the classical Tower of Hanoi, every move has unit cost, so $w=(1,1,1)^{\mathrm t}$ and
$|w|=3$. Theorem~\ref{thm:non-massive} gives
$$
d_n = (3\ell_{n-1} + c_n)(1,1,1)^{\mathrm t}.
$$
Using the identity (see \cite[Eq.~(1)]{Hinz_2025})
$$
3\ell_{n-1} + c_n = 2^n - 1,
$$
we recover the well-known Mersenne numbers $M_n=2^n-1$
\textnormal{(\seqnum{A000225})}, i.e.
$$
d_{n,0} = d_{n,1} = d_{n,2} = M_n.
$$
\end{example}

\begin{example}[Heavy middle peg]\label{ex:fried}
Consider $w=(1,2,1)^{\mathrm t}$, so that moves between pegs 0 and 2 have double cost. (This is the variant studied in \cite{Fried_2025, Hinz_2025}.)  Then $|w|=4$, and
Theorem~\ref{thm:non-massive} yields
$$
d_{n,0} = d_{n,2} = 4\ell_{n-1} + c_n,\qquad
d_{n,1} = 4\ell_{n-1} + 2c_n.
$$
Using the identity (see \cite[Eq.~(L.3)]{Hinz_2017})
$$
2\ell_{n-1} + c_n = \ell_n,
$$
we can also write
$$
d_{n,0} = d_{n,2} = 2\ell_n - c_n=\ell_{n+1}-1=\seqnum{A084639}(n),\qquad  
d_{n,1} = 2\ell_n=\seqnum{A167030}(n+2).
$$
By \cite[Theorem~1]{Hinz_2025} the one-LDM branch in Theorem~\ref{thm:weighted-hanoi} is always optimal for this
weight vector,
so these formulas indeed give the true minimal costs.  
\end{example}

\begin{example}[Cheap idle peg]
If $w_{n,0}=1=w_{n,2}$ and $w_{n,1}=0$, i.e.\ $w=(1,0,1)^{\mathrm t}$ and $|w|=2$, then
Theorem~\ref{thm:non-massive} gives
$$
d_{n,0} = d_{n,2} = 2\ell_{n-1} + c_n=\ell_n,\qquad
d_{n,1} = 2\ell_{n-1}=\seqnum{A167030}(n+1).
$$
A direct check shows that for this choice of $w$ the one-LDM branch always
yields the minimum in Theorem~\ref{thm:weighted-hanoi}, so the above formulas again
coincide with the optimal values.
\end{example}

\subsection{Massive Discs}

We now consider the case where weights do not depend on the idle peg, but may vary from disc to disc.

\begin{theorem}
\label{thm:disc-dependent}
Assume that there exists a sequence $(\alpha_n)_{n\in\mathbb{N}_0}$ of nonnegative real numbers with $w_{n,k} = \alpha_n$ for all $k\in T$.  Then $d_{n,0}=d_{n,1}=d_{n,2}=:t_n$, and the sequence $(t_n)_{n\in\mathbb{N}_0}$ satisfies
\begin{equation}\label{eq:disc-dependent-rec}
    t_0=0,\qquad 
t_{n+1} = 2t_n + \alpha_n,
\end{equation}
whence
\begin{equation}\label{eq:disc-dep-explicit}
    t_n =  \sum_{\nu=0}^{n-1} 2^{ n-1-\nu} \alpha_{\nu}.
\end{equation}
\end{theorem}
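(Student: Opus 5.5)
We will work directly from the general recurrence of Theorem~\ref{thm:weighted-hanoi}, without assuming in advance which branch is optimal. First we show by induction on $n$ that $d_{n,0}=d_{n,1}=d_{n,2}$. For $n=0$ all three vanish. Suppose $d_{n,k}=t_n$ for every $k\in T$. Since $w_{n,k}=\alpha_n$ for every $k$, the two entries of the minimum in \eqref{eqn:minimum} become
$$
d_{n,i}+d_{n,j}+w_{n,k}=2t_n+\alpha_n,\qquad 3d_{n,k}+w_{n,i}+w_{n,j}=3t_n+2\alpha_n .
$$
Neither expression depends on $k$, so $d_{n+1,k}$ is the same for all $k$. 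This completes the induction for peg-independence and defines $t_{n+1}$.

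Next we settle which branch attains the minimum. Their difference is $(3t_n+2\alpha_n)-(2t_n+\alpha_n)=t_n+\alpha_n$. The $\alpha_\nu$ are nonnegative, so an induction shows $t_n\ge 0$ for all $n$, and hence the difference is nonnegative. The minimum is therefore always attained by the one-LDM branch (possibly with a tie), which gives $t_{n+1}=2t_n+\alpha_n$ and $t_0=0$, i.e.\ \eqref{eq:disc-dependent-rec}. Equivalently, the hypothesis of Lemma~\ref{lem:matrix-one-ldm} is satisfied. Because $\mathbf{A}(1,1,1)^{\mathrm t}=2(1,1,1)^{\mathrm t}$, this recurrence is exactly \eqref{eq:matrix-rec} restricted to the diagonal.

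For the explicit formula \eqref{eq:disc-dep-explicit}, we can either unroll the scalar recurrence directly by induction on $n$, or apply Proposition~\ref{prop:solution-linear} with $d_0=0$ and $w_\nu=\alpha_\nu(1,1,1)^{\mathrm t}$. In the second route, Lemma~\ref{lem:A-power} gives $\mathbf{A}^\nu(1,1,1)^{\mathrm t}=(2J_\nu+\widetilde J_\nu)(1,1,1)^{\mathrm t}=2^\nu(1,1,1)^{\mathrm t}$, using $3J_\nu+(-1)^\nu=2^\nu$. This yields $t_n=\sum_{\nu=0}^{n-1}2^{\nu}\alpha_{n-1-\nu}$, which is \eqref{eq:disc-dep-explicit} after reindexing. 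The induction check is immediate: $2\sum_{\nu=0}^{n-1}2^{n-1-\nu}\alpha_\nu+\alpha_n=\sum_{\nu=0}^{n}2^{n-\nu}\alpha_\nu$.

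The only step that is not pure bookkeeping is the branch comparison, which relies on nonnegativity of the weights to ensure that the one-LDM branch is optimal. It is important to establish this rather than assume it, since the statement does not presuppose one-LDM optimality.
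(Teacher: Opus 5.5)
Your proposal is correct and follows essentially the paper's argument: reduce \eqref{eqn:minimum} to $t_{n+1}=\min\{2t_n+\alpha_n,\,3t_n+2\alpha_n\}$, observe that the difference $t_n+\alpha_n$ is nonnegative, and unroll the resulting linear recurrence. You also make explicit two points the paper leaves implicit: the induction behind peg symmetry, and the nonnegativity of $t_n$ needed for the branch comparison.
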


\begin{proof}
Peg symmetry implies $d_{n,0}=d_{n,1}=d_{n,2}=:t_n$. The recurrence
\eqref{eqn:minimum} becomes
$$
t_0=0,
\qquad
t_{n+1}
=
\min\{2t_n + \alpha_n,   3t_n + 2\alpha_n\}.
$$
We have
$$
(3t_n + 2\alpha_n) - (2t_n + \alpha_n) = t_n + \alpha_n \ge 0,
$$
with strict inequality as soon as either $t_n>0$ or $\alpha_n>0$. Hence, $t_{n+1} = 2t_n + \alpha_n$ for all $n$.

By induction we get \eqref{eq:disc-dep-explicit}. 
\end{proof}

\begin{proposition}[Geometric disc costs]
\label{cor:geometric}
Assume peg symmetry as before and
$
\alpha_n = c r^n
$
for some $c,r\in [0,\infty[$.
Then
\begin{eqnarray*}
    t_n
=
c 2^{n-1}\sum_{\nu=0}^{n-1}\Bigl(\frac{r}{2}\Bigr)^\nu
=
\begin{cases}
\displaystyle 
c 2^{n-1} \frac{1-(r/2)^n}{1-r/2}=c   \frac{2^n-r^n}{2-r}, &\textrm{if } r\neq 2,\\[2mm]
c 2^{n-1} n,&\textrm{if } r=2.
\end{cases}
\end{eqnarray*}
\end{proposition}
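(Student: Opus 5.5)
The plan is to specialize the explicit formula \eqref{eq:disc-dep-explicit} of Theorem~\ref{thm:disc-dependent} to $\alpha_\nu = c r^\nu$. Peg symmetry, with $w_{n,k}=\alpha_n$ for all $k$, places us exactly in the setting of that theorem. So the one-LDM branch is automatically optimal and we may use
$$
t_n=\sum_{\nu=0}^{n-1}2^{n-1-\nu}c r^\nu .
$$
Factoring out $c\,2^{n-1}$ leaves
$$
t_n=c\,2^{n-1}\sum_{\nu=0}^{n-1}\Bigl(\frac{r}{2}\Bigr)^\nu ,
$$
which is the first expression claimed. This rewriting is valid for all $n\ge 1$. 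For $n=0$ the sum is empty, so both sides vanish regardless of the prefactor $2^{-1}$.

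Next we evaluate the geometric sum, splitting into two cases.

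\emph{Case $r\neq 2$.} The ratio is $q=r/2\neq 1$, so the finite geometric series gives $\sum_{\nu=0}^{n-1}q^\nu=(1-q^n)/(1-q)$. Multiplying numerator and denominator by $2^n$ and simplifying yields
$$
c\,2^{n-1}\,\frac{1-(r/2)^n}{1-r/2}=c\,\frac{2^n-r^n}{2-r}.
$$

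\emph{Case $r=2$.} Each of the $n$ terms equals $1$, so $t_n=c\,2^{n-1}n$.

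As a sanity check, both formulas return $t_0=0$ and $t_1=c$, consistent with \eqref{eq:disc-dependent-rec}.

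There is no genuine obstacle here. The only points needing care are the degenerate conventions: the $n=0$ case (empty sum), $r=0$ (with $0^0=1$, so only $\alpha_0=c$ is nonzero, and the formula gives $c\,2^{n-1}$, which is correct), and the borderline $r=2$, which must be excluded from the quotient formula. Alternatively, one could verify the closed form directly by induction, checking that it satisfies $t_{n+1}=2t_n+cr^n$; this is a one-line computation.
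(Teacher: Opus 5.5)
Your proposal is correct and follows the paper's proof: substitute $\alpha_\nu=cr^\nu$ into \eqref{eq:disc-dep-explicit}, factor out $c\,2^{n-1}$, and sum the finite geometric series, splitting into the cases $r\neq 2$ and $r=2$. Your extra remarks on $n=0$ and on $r=0$ (with the convention $0^0=1$) are accurate, and they make explicit what the paper leaves implicit.
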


\begin{proof}
Substitute $\alpha_\nu = c r^\nu$ into \eqref{eq:disc-dep-explicit} and factor out
$c 2^{n-1}$:
$$
t_n
= \sum_{\nu=0}^{n-1} 2^{n-1-\nu} c r^\nu
= c 2^{n-1} \sum_{\nu=0}^{n-1}\Bigl(\frac{r}{2}\Bigr)^\nu.
$$
This finite geometric progression sum leads to the stated closed form.
\end{proof}

\begin{proposition}[Arithmetic disc costs]\label{prop:arithmetic}
Assume peg symmetry and
$
\alpha_n = a + b n
$
for some $a,b\in [0,\infty[$.
Then the minimal cost sequence $(t_n)_{n\ge0}$ satisfies
\begin{equation*}
    t_n
=
\sum_{\nu=0}^{n-1} 2^{ n-1-\nu}(a+b\nu)
=
a(2^n-1)
+
b\bigl(2^n-n-1\bigr).
\end{equation*}
\end{proposition}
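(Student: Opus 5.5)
The plan is to specialize Theorem~\ref{thm:disc-dependent}. The weights $w_{n,k}=\alpha_n=a+bn$ are nonnegative because $a,b\ge 0$, so that theorem applies. It tells us that the minimal costs are peg-independent, $d_{n,k}=t_n$, that the one-LDM branch is optimal at every level, and that $t_n$ is given by \eqref{eq:disc-dep-explicit}. Substituting $\alpha_\nu=a+b\nu$ gives the first equality of the statement immediately, so only the closed form remains.

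For the closed form, I split the sum into the part proportional to $a$ and the part proportional to $b$. The $a$-part is $a\sum_{\nu=0}^{n-1}2^{n-1-\nu}=a(2^n-1)$, a plain geometric sum. This also agrees with Proposition~\ref{cor:geometric} for $c=a$, $r=1$.

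The $b$-part requires $S_n:=\sum_{\nu=0}^{n-1}\nu\,2^{n-1-\nu}$, which is the only step needing any care. I would avoid differentiating a geometric series and argue by induction instead.

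\begin{itemize}
\item The sequence $S_n$ is the $t_n$ of Theorem~\ref{thm:disc-dependent} for $\alpha_\nu=\nu$. It therefore satisfies $S_0=0$ and $S_{n+1}=2S_n+n$.
\item The candidate $2^n-n-1$ vanishes at $n=0$.
\item It satisfies the same recurrence, since
$$2(2^n-n-1)+n=2^{n+1}-(n+1)-1.$$
\end{itemize}

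Hence $S_n=2^n-n-1$ for all $n$. As a sanity check, the case $n=2$ gives $S_2=1=2^2-2-1$. Adding the two parts yields $t_n=a(2^n-1)+b(2^n-n-1)$. The main risk is only an index slip in $S_n$, and the induction settles it cleanly.
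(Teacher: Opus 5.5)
Your proposal is correct and follows the paper's proof. Both specialize Theorem~\ref{thm:disc-dependent}, split the sum, and use $\sum_{\nu=0}^{n-1}2^{n-1-\nu}=2^n-1$ and $\sum_{\nu=0}^{n-1}\nu\,2^{n-1-\nu}=2^n-n-1$; the only difference is that you verify the second identity via $S_{n+1}=2S_n+n$ (which also follows directly from the definition of $S_n$), whereas the paper just states it.
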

\begin{proof}
By Theorem~\ref{thm:disc-dependent},
$$
t_n=\sum_{\nu=0}^{n-1}2^{ n-1-\nu}\alpha_\nu
=\sum_{\nu=0}^{n-1}2^{ n-1-\nu}(a+b\nu).
$$
Using
$$
\sum_{\nu=0}^{n-1}2^{ n-1-\nu}=2^n-1,
\qquad
\sum_{\nu=0}^{n-1}\nu2^{ n-1-\nu}=2^n-n-1,
$$
the stated closed form follows.
\end{proof}

The special case $a=1=b$ of Proposition~\ref{prop:arithmetic} is

\begin{proposition}[Massive discs with natural masses]
\label{prop:massive-unified}
Assume that the cost of
moving disc $n+1$ is $\alpha_n = n+1$ (natural masses). Then, for all
$n$,
\begin{equation}\label{eq:massive-unified-sum}
    t_n
    = \sum_{\nu=0}^{n-1}2^{ n-1-\nu}(\nu+1)
    = \sum_{\nu=0}^{n-1}2^{\nu}(n-\nu)
    = \sum_{\nu=0}^n M_\nu=E_n,
\end{equation}
where 
$E_n=2^{n+1}-n-2$ forms the Euler sequence \textnormal{\seqnum{A000295}}$(n+1)$.
\end{proposition}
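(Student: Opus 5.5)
The plan is to prove the chain of equalities in \eqref{eq:massive-unified-sum} one link at a time, starting from the explicit formula \eqref{eq:disc-dep-explicit} of Theorem~\ref{thm:disc-dependent}. No optimality issue arises here. Theorem~\ref{thm:disc-dependent} already shows that under peg symmetry the one-LDM branch is optimal, so $t_n=\sum_{\nu=0}^{n-1}2^{n-1-\nu}\alpha_\nu$ with $\alpha_\nu=\nu+1$. This gives the first expression directly.

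For the second equality I would reindex by $\mu=n-1-\nu$. Then $2^{n-1-\nu}(\nu+1)$ becomes $2^{\mu}(n-\mu)$, and $\mu$ again runs over $0,\dots,n-1$.

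For the equality with $\sum_{\nu=0}^n M_\nu$, I would write $n-\mu$ as the number of indices $\kappa$ with $\mu<\kappa\le n$. Exchanging the order of summation then gives
\[
\sum_{\mu=0}^{n-1}2^\mu(n-\mu)=\sum_{\kappa=1}^{n}\sum_{\mu=0}^{\kappa-1}2^\mu=\sum_{\kappa=1}^n(2^\kappa-1)=\sum_{\kappa=0}^n M_\kappa,
\]
using $M_0=0$.

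Finally, $\sum_{\kappa=0}^n(2^\kappa-1)=(2^{n+1}-1)-(n+1)=2^{n+1}-n-2=E_n$. As a consistency check, this also follows from Proposition~\ref{prop:arithmetic} with $a=b=1$: $(2^n-1)+(2^n-n-1)=2^{n+1}-n-2$.

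The only step needing any care is the double-sum interchange, and specifically getting the index ranges right, including the empty case $n=0$, where both sides are $0$. An alternative that avoids the interchange is induction: the partial sums $S_n=\sum_{\nu\le n}M_\nu$ satisfy $S_{n+1}=2S_n+(n+1)$, because $2S_n+n+1=\sum_{\nu=0}^n(2^{\nu+1}-2)+n+1=S_{n+1}$. This matches the recurrence \eqref{eq:disc-dependent-rec} with $\alpha_n=n+1$, and $S_0=0=t_0$. Identifying the result with OEIS \seqnum{A000295}$(n+1)$ is then just the closed form $2^{n+1}-(n+1)-1$.
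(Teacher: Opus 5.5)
Your proof is correct, but it reaches the chain by a different route than the paper. The paper gives no separate argument. It introduces the statement as the special case $a=b=1$ of Proposition~\ref{prop:arithmetic}, so that $t_n=(2^n-1)+(2^n-n-1)=E_n$, which is exactly what you use only as a consistency check. The reindexed sum and the identification with $\sum_{\nu=0}^n M_\nu$ are left implicit there. The later proof of Corollary~\ref{thm:massive-linear} refers back to \eqref{eq:massive-unified-sum} ``by induction,'' so the authors apparently have in mind your alternative recurrence $S_{n+1}=2S_n+(n+1)$. Your main route instead proves the Mersenne-sum link directly: you write $n-\mu$ as the number of indices $\kappa$ with $\mu<\kappa\le n$ and interchange the double sum, then obtain $E_n$ from one geometric sum. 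This justifies every equality in the chain explicitly and shows why $t_n$ is a sum of classical Hanoi costs. The paper's specialization is shorter because it reuses the two sums already computed for arithmetic costs, but it leaves the middle links to the reader.
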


In the previous two cases the model was peg-symmetric.  
We now include the following model:

\begin{proposition}[Massive discs with one cheap idle peg]\label{prop:massive-cheap-peg}
Let $w_{n,0}=n+1=w_{n,2}$, and $ w_{n,1}=0$. Then 
$$
d_{n,0}=d_{n,2}= \sum_{\nu=0}^{n}\ell_\nu,\qquad
d_{n,1}=2\sum_{\nu=0}^{n-1}\ell_\nu,
$$
So we obtain the sequence \seqnum{A178420}$(n+1)$ of partial sums of the Lichtenberg sequence.
\end{proposition}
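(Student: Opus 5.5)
The plan has two parts. First, show that the one-LDM branch of Theorem~\ref{thm:weighted-hanoi} is optimal at every level for these weights, so that Lemma~\ref{lem:matrix-one-ldm} and Proposition~\ref{prop:solution-linear} apply with $d_0=(0,0,0)^\tt$. Second, compute the closed form by superposing the ``cheap idle peg'' example.

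\textbf{Closed form.} Put $u=(1,0,1)^\tt$, so that $w_\nu=(\nu+1)u$. By Proposition~\ref{prop:solution-linear},
$$d_n=\sum_{\mu=0}^{n-1}(n-\mu)\mathbf{A}^\mu u .$$
Writing $n-\mu=\#\{m: \mu+1\le m\le n\}$ and exchanging the sums gives
$$d_n=\sum_{m=1}^{n}\sum_{\mu=0}^{m-1}\mathbf{A}^\mu u=\sum_{m=1}^n e_m,$$
where $e_m$ is the solution with constant weight $u$. By the cheap-idle-peg example (Theorem~\ref{thm:non-massive}), $e_m=(\ell_m,\,2\ell_{m-1},\,\ell_m)^\tt$. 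Since $\ell_0=0$, this yields
$$d_{n,0}=d_{n,2}=\sum_{\nu=0}^n\ell_\nu,\qquad d_{n,1}=2\sum_{\nu=0}^{n-1}\ell_\nu .$$

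\textbf{Optimality.} I would argue by induction on $n$ that the true minimal costs coincide with the values above. The base case $n=0$ is trivial. For the step, assume the formulas hold at level $n$. By Theorem~\ref{thm:weighted-hanoi} it suffices to check, for each $k$, that the two-LDM expression is at least the one-LDM expression. Write $S_n=\sum_{\nu\le n}\ell_\nu$ and $S'_n=\sum_{\nu<n}\ell_\nu$.

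For $k=1$ (so $\{i,j\}=\{0,2\}$), the one-LDM value is $2S_n+0$ and the two-LDM value is $6S'_n+2(n+1)$. The required inequality reduces to
$$2S'_n+n+1\ge \ell_n .$$
This follows from $\ell_n=2\ell_{n-1}+c_n\le 2S'_n+1$, which is identity (L.3) of \cite{Hinz_2017} for $n\ge1$; the case $n=0$ is immediate.

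For $k=0$, with $(i,j)=(1,2)$, the one-LDM value is $2S'_n+S_n+(n+1)$ and the two-LDM value is $3S_n+(n+1)$. The inequality reduces to $S_n\ge S'_n$, which holds because $\ell_n\ge0$. The case $k=2$ is symmetric to $k=0$.

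Hence the minimum in \eqref{eqn:minimum} is attained by the first entry, and the one-LDM recurrence \eqref{eqn:minimumonemove} gives the level $n+1$ values.

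\textbf{Main obstacle.} The expected difficulty is the optimality check for $k=1$, since there the two-LDM branch benefits from the zero weight $w_{n,1}=0$. However, it reduces to the elementary Lichtenberg bound above, so I expect it to go through.

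Finally, to identify the sequence, I would note that $\sum_{\nu=0}^n\ell_\nu$ is by definition the sequence of partial sums of the Lichtenberg sequence, i.e.\ \seqnum{A178420}$(n+1)$ after matching offsets.
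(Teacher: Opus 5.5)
Your proof is correct. It uses the paper's framework (Proposition~\ref{prop:solution-linear} with $d_0=0$, then a reduction to sums of Lichtenberg numbers), but you organize the computation differently.

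The paper evaluates $\mathbf{A}^\nu w_{n-\nu-1}=(n-\nu)(J_{\nu+1},2J_\nu,J_{\nu+1})^{\mathrm t}$ via Lemma~\ref{lem:A-power}. It then appeals to the scalar identity $\sum_{\nu=0}^{n-1}(n-\nu)J_\nu=\sum_{\nu=0}^{n-1}\ell_\nu$, proved by induction. You instead exchange the order of summation at the vector level and write $d_n=\sum_{m=1}^n e_m$ as partial sums of the cheap-idle-peg solutions. The Lichtenberg structure is then inherited directly from Theorem~\ref{thm:non-massive}. This superposition makes the paper's ``induction!'' identity transparent: it is the same exchange of summation applied to $\ell_{m-1}=\sum_{\nu<m}J_\nu$. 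It also applies verbatim to any weights of the form $w_\nu=(\nu+1)u$.

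The more substantive difference is optimality. The paper only asserts that ``elementary calculations'' confirm it; your induction actually carries them out, and correctly. For $k=1$ the comparison reduces to $2S'_n+n+1\ge\ell_n$, which follows from $\ell_n=2\ell_{n-1}+c_n\le 2S'_n+1$. For $k\in\{0,2\}$ it reduces to $\ell_n\ge 0$.

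Your computation also locates the ties precisely. The $k\in\{0,2\}$ inequality is an equality only for $n=0$, i.e.\ for a single disc, where both routes cost $1$. For a $2$-tower the two branches give $3$ and $5$. So the paper's closing remark, which places the tie at two discs, appears to be off by one.
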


\begin{proof}
Here the weight depends on the disc index \emph{and} is nonuniform across pegs.
Thus Theorem~\ref{thm:disc-dependent} does not apply directly, since
the symmetry $d_{n,0}=d_{n,1}=d_{n,2}$ is broken.

For
$$
\mathbf{A}^{\nu}
= \mathbf{J}_\nu + (-1)^\nu\mathbf{I},
$$
we compute
$$
\mathbf{A}^{\nu} w_{n-\nu-1}
= (n-\nu) (J_{\nu+1},   2J_\nu,   J_{\nu+1})^{\mathrm t},
$$
so that
$$
d_n = \sum_{\nu=0}^{n-1} (n-\nu) (J_{\nu+1},  2J_\nu,  J_{\nu+1})^{\mathrm t}.
$$
Using the identity (induction!)
$$
\sum_{\nu=0}^{n-1}(n-\nu) J_\nu = \sum_{\nu=0}^{n-1}\ell_\nu,
$$
we obtain the desired closed form for $d_n$. Again elementary calculations show that they are indeed optimal, i.e.~the one-LDM scheme is followed throughout. However, for two discs (only) and idle peg~0 or 2 the largest disc may move once {\em or} twice in a minimal solution!
\end{proof}

We now extend the arithmetic family to general polynomial disc costs. We need the following lemma.
\begin{lemma}\label{lem:polynonials}
Let $P\in\mathbb{R}[x]$ be a polynomial of degree $\delta\in\mathbb{N}_0$. Then there is a unique polynomial $Q\in\mathbb{R}[x]$ of degree $\delta$ such that for all $x\in\mathbb{R}$,
\begin{equation}\label{eq:poly-general-form}
2Q(x)-Q(x+1)=P(x).
\end{equation}
Moreover, if $P$ has integer coefficients, then so has $Q$.
\end{lemma}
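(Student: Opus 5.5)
Consider the linear operator $L\colon \mathbb{R}[x]_{\le\delta}\to\mathbb{R}[x]_{\le\delta}$, $L(Q)(x)=2Q(x)-Q(x+1)$, on the finite-dimensional space of polynomials of degree at most $\delta$. It maps this space into itself because the shift $Q(x)\mapsto Q(x+1)$ preserves degree. Write $L=\mathbf{I}-\Delta$ with $\Delta Q(x)=Q(x+1)-Q(x)$, the forward difference operator. Then $\Delta$ strictly lowers degree, so it is nilpotent on $\mathbb{R}[x]_{\le\delta}$: we have $\Delta^{\delta+1}=0$ there.

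Hence $L$ is invertible, with the explicit finite Neumann series
\[
L^{-1}=\sum_{m=0}^{\delta}\Delta^m ,
\]
which one verifies by telescoping: $(\mathbf{I}-\Delta)\sum_{m=0}^{\delta}\Delta^m=\mathbf{I}-\Delta^{\delta+1}=\mathbf{I}$. Setting $Q=\sum_{m=0}^{\delta}\Delta^m P$ gives existence. Uniqueness within $\mathbb{R}[x]_{\le\delta}$ follows from invertibility. For uniqueness among all polynomials, suppose $Q$ has degree $e>\delta$. Its leading coefficient $q$ survives in $2Q(x)-Q(x+1)$ as $(2-1)q=q\neq0$, so $\deg L(Q)=e\neq\delta$, a contradiction. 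The same leading-coefficient observation shows $\deg Q=\delta$ exactly: $Q=P+\Delta P+\dots$, and each $\Delta^m P$ with $m\ge1$ has degree less than $\delta$, so $Q$ and $P$ share the same leading coefficient.

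For the integrality claim, observe that if $P$ has integer coefficients then so does $P(x+1)$, by the binomial expansion of $(x+1)^r$. Therefore $\Delta P$ has integer coefficients, and inductively so does every $\Delta^m P$. Their sum $Q$ is then integral as well.

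The only step needing care is the degree argument that rules out solutions of higher degree. This is exactly where the factor $2\ne1$ enters: the leading term cannot cancel, which would fail for the plain difference equation $Q(x)-Q(x+1)=P(x)$. Alternatively, one can compare coefficients from the top down. The coefficient system is triangular with diagonal entries $2-1=1$, which gives existence, uniqueness and integrality at once, since dividing by the diagonal entry $1$ introduces no denominators. I would present the Neumann-series version because it also yields the explicit formula $Q=\sum_{m\ge0}\Delta^mP$, useful in the subsequent application to polynomial disc costs.
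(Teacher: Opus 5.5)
Your proof is correct, but the route you present is not the paper's. The paper uses the alternative you sketch in your last paragraph. It writes $Q(x)=\sum_{m=0}^{\delta}q_mx^m$, expands $Q(x+1)$ binomially, and solves the triangular system $q_m-\sum_{\mu=m+1}^{\delta}\binom{\mu}{m}q_\mu=p_m$ from the top down, starting with $q_\delta=p_\delta$. Existence, uniqueness and integrality are all read off from the unit diagonal.

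Your factorization $L=\mathbf{I}-\Delta$ with $\Delta$ nilpotent is the basis-free form of the same triangularity. It gives you two things the coefficient recursion does not make visible.
\begin{itemize}
\item The closed formula $Q=\sum_{m=0}^{\delta}\Delta^mP$. With it, the constant $Q(0)$ in $t_n=Q(0)\cdot 2^n-Q(n)$ of Theorem~\ref{thm:poly-costs} is simply the sum along the leading diagonal of the difference table of $P$. For example, $P(x)=(x+1)^2$ gives $Q=(x+1)^2+(2x+3)+2=x^2+4x+6$ at once, and the cubic example's $x^3+6x^2+18x+26$ comes out the same way.
\item Uniqueness among all polynomials, via your leading-coefficient argument. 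The paper's ansatz already presupposes $\deg Q\le\delta$, which is enough for the lemma as stated, since it only claims uniqueness among polynomials of degree $\delta$.
\end{itemize}
The paper's recursion is, for its part, the more direct tool for computing the coefficients $q_m$ in the monomial basis. That is how it is used in the quadratic and cubic examples.
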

\begin{proof}
We write $P(x)=\displaystyle{\sum_{m=0}^\delta p_mx^m}$ and, assuming existence, $Q(x)=\displaystyle{\sum_{m=0}^\delta q_mx^m}$. Then, using the binomial theorem for developing $Q(x+1)$, plugging in everything into \eqref{eq:poly-general-form} and comparing coefficients of powers of $x$, we obtain the system
\begin{equation}\label{eq:polysystem}
\forall m\in\{0,1,\ldots,\delta\}:  q_m-\sum_{\mu=m+1}^\delta \binom{\mu}{m}q_\mu=p_m.
\end{equation}
Starting with $m=\delta$, i.e.~$q_\delta=p_\delta$, the system \eqref{eq:polysystem} can uniquely be solved successively. It is also clear that $q_m\in\mathbb{Z}$, if all $p_m\in\mathbb{Z}$.
\end{proof}

\begin{theorem}[Polynomial disc costs]
\label{thm:poly-costs}
In the situation of Theorem~\ref{thm:disc-dependent} let 
$\alpha_n=P(n)$, with a nontrivial polynomial~$P$. Then for all $n$,
\begin{equation}\label{eq:polynomialsolution}
t_n=\sum_{\nu=0}^{n-1} 2^{n-1-\nu}P(\nu)=Q(0)\cdot 2^n-Q(n)
\end{equation}
with the polynomial $Q$ from Lemma~\ref{lem:polynonials}.
\end{theorem}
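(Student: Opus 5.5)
The first equality in \eqref{eq:polynomialsolution} is just \eqref{eq:disc-dep-explicit} from Theorem~\ref{thm:disc-dependent} with $\alpha_\nu=P(\nu)$. The hypothesis that $P$ is nontrivial only guarantees that $\delta=\deg P\in\mathbb{N}_0$ is defined, so that Lemma~\ref{lem:polynonials} applies. (If $P$ takes negative values, weights would not be admissible, but the algebraic identity still holds for the sum.) The whole content is therefore the closed form of the sum, and I would obtain it by telescoping.

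By Lemma~\ref{lem:polynonials} there is a polynomial $Q$ with $P(\nu)=2Q(\nu)-Q(\nu+1)$ for every $\nu$. Substituting this into the sum gives
$$
\sum_{\nu=0}^{n-1}2^{n-1-\nu}P(\nu)=\sum_{\nu=0}^{n-1}\bigl(2^{n-\nu}Q(\nu)-2^{n-1-\nu}Q(\nu+1)\bigr).
$$
Put $a_\nu:=2^{n-\nu}Q(\nu)$. The $\nu$-th summand is then exactly $a_\nu-a_{\nu+1}$, so the sum telescopes to $a_0-a_n=2^nQ(0)-Q(n)$, which is the claim. For $n=0$ the empty sum equals $Q(0)-Q(0)=0=t_0$, consistent with the formula.

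Alternatively, one can argue by induction through the recurrence \eqref{eq:disc-dependent-rec}. Set $s_n:=Q(0)2^n-Q(n)$. Then $s_0=0$, and
$$
2s_n+P(n)=Q(0)2^{n+1}-2Q(n)+2Q(n)-Q(n+1)=s_{n+1}.
$$
So $(s_n)$ satisfies the same recurrence and initial value as $(t_n)$, hence $s_n=t_n$.

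There is no real obstacle here. The only point needing care is the index bookkeeping in the telescoping: the weight $2^{n-1-\nu}$ must pair with $Q(\nu+1)$ to match $a_{\nu+1}$. I would probably present the induction version, since it is shorter and uses the recurrence already established in Theorem~\ref{thm:disc-dependent}.
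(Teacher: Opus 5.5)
Your proof is correct, and the induction version you say you would present is exactly the paper's argument: define $s_n=Q(0)\cdot 2^n-Q(n)$, check $s_0=0$ and $s_{n+1}-2s_n=2Q(n)-Q(n+1)=P(n)$, and conclude $s_n=t_n$ from the recurrence of Theorem~\ref{thm:disc-dependent}. Your telescoping computation is the unrolled form of the same identity and is also correct.
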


\begin{proof}
Define
$$
    s_n=Q(0)\cdot 2^n-Q(n).
$$
Then $s_0=Q(0)-Q(0)=0$, and 
$$
\begin{aligned}
    s_{n+1}-2s_n
    &=
    \bigl(Q(0)\cdot 2^{n+1}-Q(n+1)\bigr)
    -
    2\bigl(Q(0)\cdot 2^n-Q(n)\bigr)  \\
    &=
    2Q(n)-Q(n+1) \\
    &= P(n).
\end{aligned}
$$
Thus
$$
    s_0=0, \qquad s_{n+1}=2s_n+P(n).
$$
The sequence $(s_n)$ satisfies the same recurrence relation and the same initial
condition as $(t_n)$. Hence $s_n=t_n$ for all $n$, and therefore
\begin{equation*}
    t_n=Q(0)\cdot 2^n-Q(n). \tag*{\qedhere}
\end{equation*}
\end{proof}

For $P(n)=1$ we have $Q(n)=1$ and recover the classical Tower of Hanoi $t_n=2^n-1=M_n$ of Example~\ref{ex:classical}. If the mass of discs grows linearly, i.e.~$P(n)=n+1$, we get $Q(n)=n+2$ and $t_n=2^{n+1}-n-2=E_n$ as in Proposition~\ref{prop:massive-unified}. A model for massive circular discs of equal thickness is given in the next example.

\begin{example}[Quadratic costs]
Let $\alpha_n=(n+1)^2=P(n)$. Then, as obtained from \eqref{eq:polysystem}, $Q(n)=n^2+4n+6$ and according to \eqref{eq:polynomialsolution}:
$$
t_n = 6\cdot 2^n -n^2-4n-6=0,1,6,21,58,141,\ldots.
$$
This corresponds to \seqnum{A047520}(n).
\end{example}

If we replace the discs by balls of different diameters, we get the following.

\begin{example}[Cubic costs]
Let $\alpha_n=(n+1)^3=P(n)$. Again \eqref{eq:polysystem} yields
$Q(n)=n^3+6n^2+18n+26$ whence from \eqref{eq:polynomialsolution}
we get
$$
t_n=26\cdot 2^n-n^3-6n^2-18n-26=0,1,10,47,158,441,\ldots,
$$
which is \seqnum{A213575}$(n)$.
\end{example}

\subsubsection{Disc Costs Given By Classical Integer Sequences}

Recurrence \eqref{eq:disc-dependent-rec} shows that choosing $\alpha_n$ to constitute a classical integer sequence automatically
generates a new derived sequence of numbers $t_n$ with explicit structure.  This allows the
Tower of Hanoi model to act as a \emph{sequence transformer}.

\begin{lemma}[Sequence-induced costs]
\label{lem:sequence-costs}
Let $(\alpha_n)_{n\in\mathbb{N}_0}$ fulfill a linear recurrence of order at most $\delta\in\mathbb{N}$ with constant coefficients, i.e.~for all $n$,
$$
\alpha_{n+\delta}=b+\sum_{\nu=0}^{\delta-1} a_\nu\alpha_{n+\nu},
$$
with coefficients $(a_0,\ldots,a_{\delta-1})\in \mathbb{R}^\delta$ and $b\in\mathbb{R}$ given and seeds $(\alpha_0,\ldots,\alpha_{\delta-1})\in\mathbb{R}^\delta$. Let $(t_n)_{n\in\mathbb{N}_0}$ satisfy the recurrence
$$
t_0\in\mathbb{R},\qquad t_{n+1}=2t_n+\alpha_n.
$$
Then for $n\leq \delta$:
\begin{equation*}
    t_n=2^n t_0+\sum_{\nu=0}^{n-1}2^{ n-1-\nu}\alpha_\nu.
\end{equation*}
and for all $n$
\begin{equation*}
t_{n+\delta+1}=b+\sum_{\nu=0}^\delta \tau_\nu t_{n+\nu}
\end{equation*}
with $\tau_\nu=a_{\nu-1}-2a_\nu$, where $a_{-1}:=0$ and $a_\delta:=-1$.
\end{lemma}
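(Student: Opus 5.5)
The plan is to eliminate $\alpha$ altogether: the recurrence for $t$ lets us express each $\alpha_n$ through two consecutive values of $t$, and substituting this into the recurrence for $\alpha$ gives a recurrence in $t$ alone. The closed formula for small $n$ is a direct unrolling.

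\emph{First assertion.} We argue by induction on $n$, exactly as in Theorem~\ref{thm:disc-dependent}, now with a general seed $t_0$. For $n=0$ the formula reads $t_0=t_0$ (empty sum). If it holds for $n$, then
$$
t_{n+1}=2t_n+\alpha_n=2^{n+1}t_0+\sum_{\nu=0}^{n-1}2^{n-\nu}\alpha_\nu+\alpha_n ,
$$
which is the formula for $n+1$. The argument never uses the recurrence of $(\alpha_n)$, so the formula holds for every $n$. The restriction $n\le\delta$ only marks the initial values, depending on the seeds $\alpha_0,\ldots,\alpha_{\delta-1}$, from which the second recurrence starts.

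\emph{Second assertion.} From $t_{m+1}=2t_m+\alpha_m$ we get $\alpha_m=t_{m+1}-2t_m$ for every $m$. Inserting this for $m=n+\delta$ and for $m=n+\nu$, $0\le\nu\le\delta-1$, into the recurrence of $(\alpha_n)$ yields
$$
t_{n+\delta+1}-2t_{n+\delta}=b+\sum_{\nu=0}^{\delta-1}a_\nu\bigl(t_{n+\nu+1}-2t_{n+\nu}\bigr).
$$
Next we solve for $t_{n+\delta+1}$ and collect the coefficient of $t_{n+\mu}$ for each $\mu\in\{0,\ldots,\delta\}$. Three sources contribute:
\begin{itemize}
\item the shifted sum gives $a_{\mu-1}$ for $\mu\ge1$;
\item the unshifted sum gives $-2a_\mu$ for $\mu\le\delta-1$;
\item the term $2t_{n+\delta}$ gives $+2$ for $\mu=\delta$.
\end{itemize}

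The only step needing care is checking that the conventions $a_{-1}:=0$ and $a_\delta:=-1$ absorb these boundary cases into the single formula $\tau_\mu=a_{\mu-1}-2a_\mu$:
\begin{itemize}
\item For $\mu=0$ the formula gives $0-2a_0$, which is correct.
\item For $1\le\mu\le\delta-1$ it gives $a_{\mu-1}-2a_\mu$ directly.
\item For $\mu=\delta$ it gives $a_{\delta-1}+2$, matching the shifted-sum term plus the $2t_{n+\delta}$ term.
\end{itemize}
This index bookkeeping is the main (minor) obstacle; everything else is immediate.
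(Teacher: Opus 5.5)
Your proposal is correct and follows essentially the same route as the paper: substitute $\alpha_m=t_{m+1}-2t_m$ into the recurrence for $(\alpha_n)$ and collect the coefficient of each $t_{n+\mu}$, with the conventions $a_{-1}:=0$ and $a_\delta:=-1$ absorbing the boundary terms. Your induction for the first assertion spells out what the paper states in one line, and you correctly note that it holds for every $n$, not only for $n\le\delta$.
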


\begin{proof}
For $n\leq\delta$, the values of $t_n$ can be obtained from the recurrence for sequence $t$ and the seeds of sequence $\alpha$. For every $n$,
\begin{align*}
t_{n+\delta+1} & = 2t_{n+\delta}+\alpha_{n+\delta}\\
& = 2t_{n+\delta}+ b+\sum_{\nu=0}^{\delta-1} a_\nu\alpha_{n+\nu}\\
& =  2t_{n+\delta}+ b+\sum_{\nu=0}^{\delta-1} a_\nu t_{n+\nu+1}-2\sum_{\nu=0}^{\delta-1} a_\nu t_{n+\nu}\\
& = b+(a_{\delta-1}+2)t_{n+\delta}+\sum_{\nu=1}^{\delta-1}(a_{\nu-1}-2a_{\nu})t_{n+\nu}-2a_0t_n\\
& =  b+\sum_{\nu=0}^\delta \tau_\nu t_{n+\nu}. \tag*{\qedhere}
\end{align*} 
\end{proof}

The simplest application of Lemma~\ref{lem:sequence-costs} is for a homogeneous ($b=0$) linear recurrence of order $\delta=1$, e.g., $\alpha_{n+1}=2\alpha_n$ with $\alpha_0=1$, i.e.~$\alpha_n=2^n$, where $a_0=2$. If we start with $t_0=0$, we get $t_1=1$ and $\tau_0=-4$, $\tau_1=4$, i.e.~the order-$2$ recurrence relation $t_{n+2}=4(t_{n+1}-t_n)$. We recover the special case $c=1$, $r=2$  of Proposition~\ref{cor:geometric}, namely the sequence of numbers $t_n=n2^{n-1}$, which is \seqnum{A001787}.

We now apply Lemma~\ref{lem:sequence-costs} to other classical integer sequences, but avoid leading 0s to prevent disc~1 from being gratuitous.

\begin{proposition}[Fibonacci disc costs]
\label{prop:fib-costs}
Assume peg symmetry and set the disc costs to be $\alpha_n = F_{n+1}$ for all $n$,   where $F_n$ are the {\em Fibonacci numbers} given by  $F_0=0$, $F_1=1$ and $F_{n+2}=F_{n+1}+F_n$ (\seqnum{A000045}). Then for all $n$,
\begin{equation*}
    t_n = 2^{n+1} - F_{n+3},
\end{equation*}
which is \seqnum{A008466}(n+1). Moreover, $(t_n)$ satisfies the order-$3$ recurrence
\begin{equation}\label{eq:fib-rec}
t_0=0,\ t_1=1,\ t_2=3,\qquad t_{n+3}=3t_{n+2}-t_{n+1}-2t_n.
\end{equation}
\end{proposition}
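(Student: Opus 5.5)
By Theorem~\ref{thm:disc-dependent}, peg symmetry gives $t_0=0$ and $t_{n+1}=2t_n+\alpha_n$ with $\alpha_n=F_{n+1}$. I would follow the pattern of the proof of Theorem~\ref{thm:poly-costs}: write down the candidate $s_n=2^{n+1}-F_{n+3}$ and check that it satisfies the same initial condition and the same recurrence. The initial value is $s_0=2-F_3=0$. For the recurrence,
$$
s_{n+1}-2s_n=\bigl(2^{n+2}-F_{n+4}\bigr)-2\bigl(2^{n+1}-F_{n+3}\bigr)=2F_{n+3}-F_{n+4}.
$$
Since $F_{n+4}=F_{n+3}+F_{n+2}$, this equals $F_{n+3}-F_{n+2}=F_{n+1}=\alpha_n$. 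Uniqueness of the solution of the first-order recurrence then gives $t_n=s_n$ for all $n$. The only mildly delicate step is this Fibonacci manipulation. The identification with \seqnum{A008466}$(n+1)$ is by comparison with that entry's formula $2^{m}-F_{m+2}$ at $m=n+1$.

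For the order-$3$ recurrence \eqref{eq:fib-rec} I would apply Lemma~\ref{lem:sequence-costs} with $\delta=2$, $b=0$ and $a_0=a_1=1$, because $\alpha_{n+2}=F_{n+3}=\alpha_{n+1}+\alpha_n$. With $a_{-1}=0$ and $a_2=-1$ the coefficients are
$$
\tau_0=-2a_0=-2,\qquad \tau_1=a_0-2a_1=-1,\qquad \tau_2=a_1-2a_2=3,
$$
so $t_{n+3}=3t_{n+2}-t_{n+1}-2t_n$. The seeds come from the first part of the lemma: $t_0=0$, $t_1=\alpha_0=F_1=1$ and $t_2=2t_1+\alpha_1=2+F_2=3$.

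As a cross-check, the characteristic polynomial $x^3-3x^2+x+2=(x-2)(x^2-x-1)$ has roots $2,\varphi,\bar\varphi$. This matches the closed form as a combination of $2^n$ and Fibonacci terms.
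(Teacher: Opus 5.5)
Your proof is correct and matches the paper's argument essentially step for step. You verify that $2^{n+1}-F_{n+3}$ satisfies $t_0=0$ and $t_{n+1}-2t_n=F_{n+1}$, then apply Lemma~\ref{lem:sequence-costs} with $\delta=2$, $a_0=a_1=1$, $b=0$ to obtain $(\tau_0,\tau_1,\tau_2)=(-2,-1,3)$; your factorization $(x-2)(x^2-x-1)$ of the characteristic polynomial is a sound extra check that the paper omits.
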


\begin{proof}
Define $u_n=2^{n+1}-F_{n+3}$. Then $u_0=0$ and
$$
u_{n+1}-2u_n
=
-  F_{n+4}+2F_{n+3}
=
F_{n+3}-F_{n+2}
=
F_{n+1}
=
\alpha_n.
$$
Hence $u_{n+1}=2u_n+\alpha_n$ with $u_0=0=t_0$, so $u_n=t_n$ for all $n$.

For \eqref{eq:fib-rec}, we have in Lemma~\ref{lem:sequence-costs}, $\delta=2$, $a_0=1=a_1$, $\alpha_0=1=\alpha_1$, $b=0=t_0$. Then $\tau_0=-2$, $\tau_1=-1$, and $\tau_2=3$.
\end{proof}

\begin{proposition}[Lucas disc costs]
\label{prop:lucas-costs}
Assume peg symmetry and set the disc costs to be $\alpha_n = L_n$ for all  $n$, where $L_n$ are the {\em Lucas numbers} given by  $L_0=2$, $L_1=1$ and $L_{n+2}=L_{n+1}+L_n$ (\seqnum{A000032}).  Then for all $n$, 
\begin{equation*}
    t_n = 3\cdot 2^n - L_{n+2}.
\end{equation*}
Moreover, $(t_n)$ satisfies the order-$3$ recurrence
\begin{equation*}\label{eq:lucas-rec}
t_0=0,\ t_1=2,\ t_2=5,\qquad t_{n+3}=3t_{n+2}-t_{n+1}-2t_n.
\end{equation*}
\end{proposition}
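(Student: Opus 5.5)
The plan mirrors the proof of Proposition~\ref{prop:fib-costs}. First I will verify the closed form. Define $u_n = 3\cdot 2^n - L_{n+2}$. Then $u_0 = 3 - L_2 = 3 - 3 = 0 = t_0$. Using the Lucas recurrence twice,
$$
u_{n+1} - 2u_n = -L_{n+3} + 2L_{n+2} = L_{n+2} - L_{n+1} = L_n = \alpha_n .
$$
So $(u_n)$ satisfies the same first-order recurrence $u_{n+1} = 2u_n + \alpha_n$ from Theorem~\ref{thm:disc-dependent}, with the same initial value. Hence $u_n = t_n$ for all $n$.

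Second, I will apply Lemma~\ref{lem:sequence-costs} to get the recurrence. The relevant data are $\delta = 2$, $a_0 = a_1 = 1$, $b = 0$, $t_0 = 0$, and seeds $\alpha_0 = 2$, $\alpha_1 = 1$. With $a_{-1} = 0$ and $a_2 = -1$, the coefficients are
$$
\tau_0 = -2a_0 = -2, \qquad \tau_1 = a_0 - 2a_1 = -1, \qquad \tau_2 = a_1 - 2a_2 = 3 .
$$
This gives $t_{n+3} = 3t_{n+2} - t_{n+1} - 2t_n$. These coefficients are identical to the Fibonacci case, as expected, since only the seeds change.

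The initial values follow from the first part of the lemma: $t_1 = \alpha_0 = 2$ and $t_2 = 2\alpha_0 + \alpha_1 = 5$. They agree with the closed form, since $6 - L_3 = 6 - 4 = 2$ and $12 - L_4 = 12 - 7 = 5$.

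There is no real obstacle here. The only care needed is in the index bookkeeping: using $L_2 = 3$ for the initial condition, and rewriting $2L_{n+2} - L_{n+3}$ correctly via $L_{n+3} = L_{n+2} + L_{n+1}$ and then $L_{n+2} - L_{n+1} = L_n$.
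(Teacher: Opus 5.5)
Your proof is correct and follows the paper's argument. Both show that $u_n=3\cdot 2^n-L_{n+2}$ satisfies $u_0=0$ and $u_{n+1}-2u_n=L_n$, then apply Lemma~\ref{lem:sequence-costs} with the Fibonacci coefficients $\tau_0=-2$, $\tau_1=-1$, $\tau_2=3$, changing only the seeds; you also write out the $\tau_\nu$ and the values $t_1=2$, $t_2=5$, which the paper leaves implicit by referring to Proposition~\ref{prop:fib-costs}.
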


\begin{proof}
Define $u_n=3\cdot 2^n-L_{n+2}$. Then $u_0=0$ and
$$
u_{n+1}-2u_n
=
-  L_{n+3}+2L_{n+2}
=
L_{n+2}-L_{n+1}
=
L_n
=
\alpha_n.
$$
Hence $u_{n+1}=2u_n+\alpha_n$ with $u_0=0=t_0$, so $u_n=t_n$ for all $n$.

The only difference to Proposition~\ref{prop:fib-costs} is the value of $\alpha_0$ which is now 2; it only affects the seeds.
\end{proof}

 \begin{proposition}[Jacobsthal disc costs]
\label{prop:jacobsthal-costs}
Assume peg symmetry and set the disc costs to be $\alpha_n = J_{n+1}$ for all  $n$,  where $J_n$ are the {\em Jacobsthal numbers} given by $J_0=0$, $J_1=1$, and $J_{n+2}=J_{n+1}+2J_n$ (see \cite[(J.1)]{Hinz_2017}). Then for all $n$,
\begin{equation*}
    t_n=\textstyle{\frac{1}{3}}\left((n+1) 2^n-J_{n+1}\right),
\end{equation*}
which forms sequence \seqnum{A045883}.
Moreover, $(t_n)$ satisfies the order-$3$ recurrence
\begin{equation}\label{eq:jacobsthal-rec}
t_0=0,\ t_1=1,\ t_2=3,\qquad t_{n+3}=3t_{n+2}-4t_n.
\end{equation}
\end{proposition}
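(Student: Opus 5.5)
Following the pattern of Propositions~\ref{prop:fib-costs} and~\ref{prop:lucas-costs}, I will define the candidate $u_n=\frac13\bigl((n+1)2^n-J_{n+1}\bigr)$ and show that it satisfies the same recurrence \eqref{eq:disc-dependent-rec} as $t_n$ with $\alpha_n=J_{n+1}$; equality $u_n=t_n$ then follows by induction. First, $u_0=\frac13(1-J_1)=0=t_0$. Next I compute
$$
u_{n+1}-2u_n=\tfrac13\bigl((n+2)2^{n+1}-J_{n+2}-(n+1)2^{n+1}+2J_{n+1}\bigr)=\tfrac13\bigl(2^{n+1}-J_{n+2}+2J_{n+1}\bigr).
$$
To simplify this I will use the closed form $J_m=\frac13(2^m-(-1)^m)$ from Lemma~\ref{lem:A-power}, which gives $2J_{n+1}-J_{n+2}=\frac13\bigl(-2(-1)^{n+1}+(-1)^{n+2}\bigr)=(-1)^n$. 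Hence $u_{n+1}-2u_n=\frac13\bigl(2^{n+1}+(-1)^n\bigr)=J_{n+1}=\alpha_n$. This identity is the only step that is not pure bookkeeping, and it reduces to the standard relation $2^{n+1}=3J_{n+1}+(-1)^{n+1}$ (equivalently $J_{n+2}=2J_{n+1}+(-1)^{n+1}$). So $u_n=t_n$ for all $n$.

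For the recurrence \eqref{eq:jacobsthal-rec} I will invoke Lemma~\ref{lem:sequence-costs} with $\delta=2$, $b=0$ and $a_0=2$, $a_1=1$, since $\alpha_{n+2}=J_{n+3}=J_{n+2}+2J_{n+1}=\alpha_{n+1}+2\alpha_n$. With $a_{-1}=0$ and $a_2=-1$ the lemma gives $\tau_0=-2a_0=-4$, $\tau_1=a_0-2a_1=0$ and $\tau_2=a_1-2a_2=3$, that is, $t_{n+3}=3t_{n+2}-4t_n$. The seeds follow from the first part of the lemma with $t_0=0$ and $\alpha_0=J_1=1$, $\alpha_1=J_2=1$: $t_1=1$ and $t_2=2\cdot1+1=3$. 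These agree with the closed form, since $t_2=\frac13(12-J_3)=\frac13(12-3)=3$.

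Finally, I will identify the result with \seqnum{A045883} by comparing initial terms: from the recurrence, $t_3=9-0=9$ and $t_4=27-4=23$. Alternatively, the characteristic polynomial $x^3-3x^2+4=(x-2)^2(x+1)$ matches the shape $(n+1)2^n$ and $(-1)^n$ of the closed form, which serves as a consistency check. I do not expect a genuine obstacle; the only care needed is the sign bookkeeping in the Jacobsthal identity.
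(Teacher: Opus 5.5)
Your proposal is correct and follows the paper's proof. You verify that $u_n=\frac13\bigl((n+1)2^n-J_{n+1}\bigr)$ satisfies $u_0=0$ and $u_{n+1}=2u_n+J_{n+1}$, and you read off the order-$3$ recurrence from Lemma~\ref{lem:sequence-costs} with $\delta=2$, $a_0=2$, $a_1=1$, $b=0$, giving $(\tau_0,\tau_1,\tau_2)=(-4,0,3)$. The only difference is cosmetic: you simplify $2^{n+1}-J_{n+2}+2J_{n+1}$ using the closed form $J_m=\frac13\bigl(2^m-(-1)^m\bigr)$, whereas the paper uses the identity $2^{n+1}=J_{n+2}+J_{n+1}$.
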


\begin{proof}
Define $u_n=\textstyle{\frac{1}{3}}\left((n+1) 2^{n}-J_{n+1}\right)$. Then $u_0=0$ and
$$
u_{n+1}-2u_n
=
\textstyle{\frac{1}{3}}\left((n+2) 2^{n+1}-J_{n+2}-(n+1) 2^{n+1}+2J_{n+1}\right)
=J_{n+1},
$$
where we have used $2^{n+1}=J_{n+2}+J_{n+1}$, which is a direct consequence of formulas (J.1) and (J.2) in \cite{Hinz_2017}.
Thus $u_{n+1}=2u_n+\alpha_n$ with $u_0=0=t_0$, whence $u_n=t_n$.

To prove \eqref{eq:jacobsthal-rec}, we refer to Lemma~\ref{lem:sequence-costs} with $\delta=2$, $a_0=2$, $a_1=1$, $\alpha_0=1=\alpha_1$, $b=0=t_0$, such that $\tau_0=-4$, $\tau_1=0$, and $\tau_2=3$.
\end{proof}

 \begin{proposition}[Pell disc costs]
\label{prop:pell-costs}
Assume peg symmetry and set the disc costs to be $\alpha_n = P_{n+1}$ for all $n$, 
where $P_n$ are the {\em Pell numbers} given by 
$
P_0=0$, $P_1=1$, and $P_{n+2}=2P_{n+1}+P_n$ (\seqnum{A000129}).
Then for all $n$,
\begin{equation*}\label{eq:pell-closed}
t_n=P_{n+2}-2^{n+1},
\end{equation*}
which forms \seqnum{A094706}.

Moreover, $(t_n)$ satisfies the order-$3$ recurrence
\begin{equation*}\label{eq:pell-rec}
t_0=0,\ t_1=1,\ t_2=4, \qquad t_{n+3}=4t_{n+2}-3t_{n+1}-2t_n.
\end{equation*}
\end{proposition}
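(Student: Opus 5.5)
We follow the pattern of Propositions~\ref{prop:fib-costs}--\ref{prop:jacobsthal-costs}: first verify the closed form by checking that it satisfies the same first-order recurrence and initial value as $(t_n)$, then read off the order-$3$ recurrence from Lemma~\ref{lem:sequence-costs}.

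\emph{Closed form.} Define $u_n=P_{n+2}-2^{n+1}$. Since $P_2=2$, we have $u_0=P_2-2=0=t_0$. Next we compute
$$
u_{n+1}-2u_n=P_{n+3}-2^{n+2}-2P_{n+2}+2^{n+2}=P_{n+3}-2P_{n+2}=P_{n+1}=\alpha_n .
$$
Here we used the Pell recurrence $P_{n+3}=2P_{n+2}+P_{n+1}$. Hence $u_{n+1}=2u_n+\alpha_n$, which is exactly \eqref{eq:disc-dependent-rec} from Theorem~\ref{thm:disc-dependent}. The two sequences share the recurrence and the initial value, so $u_n=t_n$ by induction. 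As a sanity check, $t_1=P_3-4=1$ and $t_2=P_4-8=4$.

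\emph{Order-$3$ recurrence.} We apply Lemma~\ref{lem:sequence-costs} with $\delta=2$, $b=0$, $t_0=0$, and $\alpha_{n+2}=\alpha_{n+1}\cdot 2+\alpha_n$. This gives $a_0=1$ and $a_1=2$, with seeds $\alpha_0=P_1=1$ and $\alpha_1=P_2=2$. Using $a_{-1}=0$ and $a_2=-1$, the coefficients are
$$
\tau_0=a_{-1}-2a_0=-2,\qquad \tau_1=a_0-2a_1=1-4=-3,\qquad \tau_2=a_1-2a_2=2+2=4 .
$$
This yields $t_{n+3}=4t_{n+2}-3t_{n+1}-2t_n$. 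The seeds $t_0=0$, $t_1=2t_0+\alpha_0=1$ and $t_2=2t_1+\alpha_1=4$ come from the first part of the lemma.

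The only place that needs care is the index bookkeeping. One must match the shift $\alpha_n=P_{n+1}$ to the correct Pell recurrence step, and use the conventions $a_{-1}=0$ and $a_\delta=-1$ when computing $\tau_2$. Everything else is a one-line computation.
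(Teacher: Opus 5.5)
Your proof is correct and follows the paper's argument essentially verbatim. You use the same auxiliary sequence $u_n=P_{n+2}-2^{n+1}$, checked against $t_0=0$ and $t_{n+1}=2t_n+\alpha_n$, and the same application of Lemma~\ref{lem:sequence-costs} with $a_0=1$, $a_1=2$, which yields $\tau_0=-2$, $\tau_1=-3$, $\tau_2=4$.
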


\begin{proof} 
Again we start with defining $u_n=P_{n+2}-2^{n+1}$. Then $u_0=0$ and 
$$
u_{n+1}-2u_n=P_{n+3}-2^{n+2}-2P_{n+2}+2^{n+2}=P_{n+1}=\alpha_n,
$$
such that $u_n=t_n$ for all $n$ follows.

In Lemma~\ref{lem:sequence-costs} we put $\delta=2$, $a_0=1$, $a_1=2$, $\alpha_0=1$, $\alpha_1=2$, and $b=0=t_0$, such that $\tau_0=-2$, $\tau_1=-3$, and $\tau_2=4$.
\end{proof}

The simplest application of Lemma~\ref{lem:sequence-costs} for an inhomogeneous ($b\neq 0$) linear recurrence is for order $\delta=1$:
\begin{proposition}[Mersenne disc costs]\label{prop:mersenne_cost}
Assume peg symmetry and set the disc costs to be $\alpha_n = M_{n+1}$. Then for all $n$, $t_n=(n-1)2^n+1$, which constitutes sequence \seqnum{A000337}, satisfying the order-$2$ recurrence 
\begin{eqnarray*}
    t_0=0,\  t_1=1, \qquad  t_{n+2}=4(t_{n+1}-t_n)+1.
\end{eqnarray*}
\end{proposition}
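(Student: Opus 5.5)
Following the pattern of Propositions~\ref{prop:fib-costs}--\ref{prop:pell-costs}, I would first guess the closed form and then verify it against the recurrence of Theorem~\ref{thm:disc-dependent}. The peg-symmetric setting of Theorem~\ref{thm:disc-dependent} applies with $\alpha_n=M_{n+1}=2^{n+1}-1$, so $t_0=0$ and $t_{n+1}=2t_n+\alpha_n$; in particular the one-LDM branch is optimal and no separate optimality check is needed. Set $u_n=(n-1)2^n+1$. Then $u_0=-1+1=0$, and
\[
u_{n+1}-2u_n = n2^{n+1}+1-(n-1)2^{n+1}-2 = 2^{n+1}-1 = M_{n+1}=\alpha_n .
\]
So $(u_n)$ and $(t_n)$ satisfy the same first-order recurrence with the same initial value, and hence $u_n=t_n$ for all $n$. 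As a cross-check, the same formula also follows from Proposition~\ref{cor:geometric} (with $c=2$, $r=2$, which gives $n2^n$) minus the classical term $2^n-1$ from Theorem~\ref{thm:poly-costs}. This yields $n2^n-2^n+1$, in agreement with $u_n$.

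For the recurrence I would invoke Lemma~\ref{lem:sequence-costs} with $\delta=1$. Since $M_{n+2}=2M_{n+1}+1$, we have $\alpha_{n+1}=2\alpha_n+1$, so $a_0=2$, $b=1$ and $\alpha_0=M_1=1$. The coefficients are then $\tau_0=a_{-1}-2a_0=-4$ and $\tau_1=a_0-2a_1=2+2=4$, using $a_1:=-1$. This gives $t_{n+2}=4t_{n+1}-4t_n+1$. For the seeds, $t_0=0$ and $t_1=2t_0+\alpha_0=1$, which is the stated form.

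The only point needing care is the bookkeeping of the conventions $a_{-1}=0$ and $a_\delta=-1$ in Lemma~\ref{lem:sequence-costs}. Everything else is a one-line computation. Finally, I would identify the values $0,1,5,17,49,\ldots$ with \seqnum{A000337}.
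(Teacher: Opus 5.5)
Your proposal is correct and follows the paper's proof essentially verbatim: you verify that $u_n=(n-1)2^n+1$ satisfies $u_0=0$ and $u_{n+1}-2u_n=M_{n+1}=\alpha_n$, and you apply Lemma~\ref{lem:sequence-costs} with $\delta=1$, $a_0=2$, $b=1$ to get $\tau_0=-4$, $\tau_1=4$. Your additional cross-check via linearity (Proposition~\ref{cor:geometric} with $c=r=2$ minus the classical case $2^n-1$) is valid but not needed.
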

\begin{proof}
For $u_n=(n-1)2^n+1$ we have $u_0=0$ and
$$
u_{n+1}-2u_n=n2^{n+1}+1-(n-1)2^{n+1}-2=M_{n+1}=\alpha_n,
$$
such that $u_n=t_n$.

In Lemma~\ref{lem:sequence-costs} we have $\alpha_0=1=b$, $a_0=2$, $t_0=0$, and we obtain $\tau_0=-4$ and $\tau_1=4$.
\end{proof}

Our last example is for the Lichtenberg sequence which fulfills the inhomogeneous order-$2$  recurrence relation $\ell_{n+2}=\ell_{n+1}+2\ell_n+1$  (see \cite[(L.1)]{Hinz_2017}).
\begin{proposition}[Lichtenberg disc costs]\label{prop:licht_cost}
Assume peg symmetry and set the disc costs to be $\alpha_n = \ell_{n+1}$. Then $t_n$ is \seqnum{A102301}$(n-1)$ and fulfills the order-$3$ recurrence
\begin{eqnarray*}
    t_0=0,\,  t_1=1,\,  t_2=4, \qquad t_{n+3}=3t_{n+2}-4t_n+1.
\end{eqnarray*}
The closed form of $t_n$ is left as an exercise!
\end{proposition}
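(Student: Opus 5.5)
We apply Lemma~\ref{lem:sequence-costs} exactly as in Propositions~\ref{prop:jacobsthal-costs} and~\ref{prop:mersenne_cost}. First we check that the shifted sequence $\alpha_n=\ell_{n+1}$ satisfies the same inhomogeneous recurrence as $\ell$. Shifting (L.1) by one index gives
$$
\alpha_{n+2}=\ell_{n+3}=\ell_{n+2}+2\ell_{n+1}+1=\alpha_{n+1}+2\alpha_n+1 .
$$
So we are in the setting of the lemma with $\delta=2$, $a_0=2$, $a_1=1$ and $b=1$. The seeds are $\alpha_0=\ell_1=1$ and $\alpha_1=\ell_2=2$, since $\ell_n=\sum_{\nu\le n}J_\nu$ gives $\ell_0=0$, $\ell_1=1$, $\ell_2=2$.

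Theorem~\ref{thm:disc-dependent} supplies the peg symmetry and the recurrence $t_0=0$, $t_{n+1}=2t_n+\alpha_n$. The initial values then follow from the first part of the lemma: $t_1=\alpha_0=1$ and $t_2=2t_1+\alpha_1=4$. The coefficients come from $\tau_\nu=a_{\nu-1}-2a_\nu$ with $a_{-1}=0$ and $a_2=-1$:
$$
\tau_0=-4,\qquad \tau_1=a_0-2a_1=0,\qquad \tau_2=a_1-2a_2=3 .
$$
This gives $t_{n+3}=3t_{n+2}-4t_n+1$, as claimed. These coefficients match the Jacobsthal case, which is expected because $\ell$ and $J$ share the characteristic polynomial $x^2-x-2$. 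The only differences are the inhomogeneity $b=1$ and the seeds.

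For the identification with \seqnum{A102301}$(n-1)$, we compute a few more terms: $t_3=3\cdot4-0+1=13$ and $t_4=3\cdot 13-4+1=36$, giving $0,1,4,13,36,\ldots$. We then match these against the OEIS entry after the index shift $n\mapsto n-1$. The robust way to do this is to check that the OEIS entry satisfies the same order-3 recurrence, or an equivalent one, with the same three seeds. Alternatively, we can use a closed form of the entry and verify the defining relation $u_{n+1}-2u_n=\ell_{n+1}$, as in the earlier propositions.

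The main obstacle is this identification step, since the statement relies on an external indexing convention. If the OEIS entry is given only through a closed form, we would substitute $\ell_{n}=\frac13\bigl(2^{n+1}-2+\tfrac{1-(-1)^n}{2}\bigr)$, or use $\ell_n=J_{n+1}+\ldots$ type identities from \cite{Hinz_2017}, and confirm the relation directly. Uniqueness of solutions to $t_{n+1}=2t_n+\alpha_n$ with $t_0=0$ then closes the argument. The recurrence part itself is routine.
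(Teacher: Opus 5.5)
Your proposal is correct and follows the paper's proof. Both apply Lemma~\ref{lem:sequence-costs} with $\delta=2$, $a_0=2$, $a_1=1$, $b=1$ and seeds $\alpha_0=1$, $\alpha_1=2$, obtaining $\tau_0=-4$, $\tau_1=0$, $\tau_2=3$. The paper gives no argument for the OEIS identification beyond asserting it, so treating that step as an external check against the terms $0,1,4,13,36,\ldots$ is at least as complete as the original.
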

\begin{proof}
In Lemma~\ref{lem:sequence-costs} we have $\delta=2$, $\alpha_0=1$, $\alpha_1=2$, $a_0=2$, $a_1=1$, $b=1$, $t_0=0$, and we obtain $\tau_0=-4$ and $\tau_1=0$, and $\tau_2=3$.
\end{proof}

 \subsection{Forbidden Moves}\label{sec:forbidden}

Tower of Hanoi variants with forbidden moves have been studied in several works.
For instance, Sapir~\cite{Sapir_2004} considered the three-peg Tower of Hanoi, restricting some move types or their directions
and presented a general algorithm for solving the only five solvable variants thus obtained.

In this section we study variants of the Tower of Hanoi in which certain moves are 
\emph{forbidden}.  
We assume throughout that direct moves between pegs $0$ and $2$ are disallowed, 
which is equivalent to assigning infinite weight to these moves, i.e., $w_{n,1}=\infty$  for all $n$. This variant is called the linear Tower of Hanoi (LTH) \cite[Section 2.3.1]{Hinz_2018} because we imagine the pegs in a row with peg~1 in the middle, thus avoiding ``long'' moves between the extreme pegs~$0$ and $2$.

If two move types are forbidden, the system degenerates to a single-edge graph and only 
one disc can be transferred.  
If all three move types are forbidden, no legal move exists at all.  
Thus the single-forbidden case considered here is the only nontrivial situation.

\begin{proposition}\label{prop:forbidden-recurrence}
Assume that moves between pegs $0$ and $2$ are forbidden, i.e.\ $w_{n,1}=\infty$ for all $n$. Then
\begin{equation}\label{eq:forbidden-one-step}
d_{n+1}=\mathbf{B} d_n+v_n,
\end{equation}
where $\mathbf{B}=\begin{pmatrix}
0&1&1\\
0&3&0\\
1&1&0
\end{pmatrix}$ and  $v_n=(w_{n,0},  w_{n,0}+w_{n,2},  w_{n,2})^{\mathrm t}.$
\end{proposition}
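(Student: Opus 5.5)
We specialize the general recurrence \eqref{eqn:minimum} of Theorem~\ref{thm:weighted-hanoi} to $w_{n,1}=\infty$, treating each of the three idle pegs $k\in T$ separately, with the convention $x+\infty=\infty$ and $\min\{x,\infty\}=x$. First we note that $d_{n,0}$ and $d_{n,2}$ stay finite for all $n$ (the linear Tower of Hanoi is solvable), and $d_{n,1}$ is finite as well, as will follow from the same recurrence. This finiteness is what lets the minimum collapse to the finite branch.

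\emph{Idle peg $k=1$} (transfer between pegs $0$ and $2$). Here $\{i,j\}=\{0,2\}$. The one-LDM branch contains $w_{n,1}=\infty$, so only the two-LDM branch survives. It gives
$$d_{n+1,1}=3d_{n,1}+w_{n,0}+w_{n,2},$$
which is the middle row of $\mathbf{B}$ together with the middle entry of $v_n$.

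\emph{Idle peg $k=0$} (transfer between pegs $1$ and $2$). Here $\{i,j\}=\{1,2\}$. The two-LDM branch $3d_{n,0}+w_{n,1}+w_{n,2}$ contains $w_{n,1}=\infty$ and is discarded. The one-LDM branch gives
$$d_{n+1,0}=d_{n,1}+d_{n,2}+w_{n,0},$$
which is the first row. \emph{Idle peg $k=2$} is symmetric and yields $d_{n+1,2}=d_{n,0}+d_{n,1}+w_{n,2}$, the third row.

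Collecting the three rows into matrix form gives \eqref{eq:forbidden-one-step}, with the initial condition $d_0=(0,0,0)^{\mathrm t}$. An induction on $n$ using these finite formulas confirms that all entries of $d_n$ remain finite, which justifies discarding the infinite branches at every step.

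The main subtlety is legitimacy rather than computation. Theorem~\ref{thm:weighted-hanoi} was stated for weights in $[0,\infty[$, so we must check that its proof survives infinite weights. The structural argument does survive: the largest disc moves at most twice, and the substructures of Cases~1 and~2 are unchanged. A forbidden move simply means that any solution using it has infinite cost. We also need the sub-transfers of smaller discs to avoid the forbidden move, which holds because $d_{n,k}$ is by definition computed with the same weights $w$.

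Finally, for $k=1$ in Case~2 the largest disc passes through peg~$1$ via moves with idle pegs $2$ and $0$. These cost $w_{n,2}$ and $w_{n,0}$, both finite, so this branch is admissible and is the only one available.
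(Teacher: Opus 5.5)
Your proof is correct and follows essentially the same route as the paper. The paper redoes the largest-disc decomposition directly (two forced moves of disc $n+1$ through peg~$1$ when $k=1$, exactly one otherwise), while you obtain the same three identities by specializing Theorem~\ref{thm:weighted-hanoi} to $w_{n,1}=\infty$ and checking that its proof survives infinite weights; your finiteness induction is harmless but not needed, since $\min\{x,\infty\}=x$ for every $x\in[0,\infty]$.
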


\begin{proof}
We argue by decomposing an optimal transfer of an $(n\!+\!1)$-tower from peg~$i$ to peg~$j$ according to the forced
moves of the largest disc, namely two if $k=1$ and one otherwise, $\{i,j,k\}=T$.

Consider the recurrence for $k=1$.
We have to (i) clear disc $n+1$ by moving the $n$ smaller discs
onto the opposite side, (ii) move disc $n+1$ to peg~$1$, (iii) transfer the $n$-tower to the opposite peg, (iv) move disc~$n+1$ to its destination, and (v) the $n$-tower on top of it.
Hence
$$
d_{n+1,1}=3 d_{n,1}+w_{n,0}+w_{n,2}.
$$
This yields the middle row of $\mathbf{B}$ and the central component of $v_n$.

Now consider the recurrence for $k=0$. We have to (i) transfer an $n$-tower from $i$ to 0, (ii) disc~$n+1$ from $i$ to $j$, and (iii) an $n$-tower from 0 to $j$. Hence
$$
d_{n+1,0}=d_{n,1}+d_{n,2}+w_{n,0}.
$$
By symmetry (interchanging pegs $0$ and $2$) we obtain for $k=2$:
$$
d_{n+1,2}=d_{n,0}+d_{n,1}+w_{n,2}.
$$
Collecting the three identities yields \eqref{eq:forbidden-one-step}.
\end{proof}

In order to solve the recurrence \eqref{eq:forbidden-one-step} explicitly, we iterate it:
\begin{equation}\label{eq:iteration_forbidden}
d_{n}=\mathbf{B}^{n}d_{0}+\sum_{\nu=0}^{n-1}\mathbf{B}^{\nu}v_{n-1-\nu}.
\end{equation}
Thus the explicit computation of $B^{n}$ is required. 

\begin{proposition}\label{prop:Bn-closed}
Let $\mathbf{B}$ be the matrix defined in Proposition~\ref{prop:forbidden-recurrence}.  
Then for every $n$,
\begin{eqnarray*}
    \mathbf{B}^n = 
\begin{pmatrix}
1-c_n & N_n & c_n\\
0     & 3^n & 0\\
c_n   & N_n & 1-c_n
\end{pmatrix},
\end{eqnarray*}
where
$
N_n = \textstyle{\frac{1}{2}}(3^n-1)
$,
forming the sequence \seqnum{A003462}.
\end{proposition}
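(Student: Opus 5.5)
We prove the formula by induction on $n$, in the same way as Lemma~\ref{lem:A-power}. For $n=0$ we have $c_0=0$ and $N_0=0$, so the right-hand side is the identity matrix $\mathbf{I}$, as required.

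For the induction step we compute $\mathbf{B}^{n+1}=\mathbf{B}^n\cdot\mathbf{B}$. Multiplying the row $(1-c_n,\,N_n,\,c_n)$ by the columns of $\mathbf{B}$ gives the first row of $\mathbf{B}^{n+1}$:
\begin{equation*}
\bigl(c_n,\; 1-c_n+3N_n+c_n,\; 1-c_n\bigr)=\bigl(c_n,\; 3N_n+1,\; 1-c_n\bigr).
\end{equation*}
The middle row $(0,3^n,0)$ becomes $(0,3^{n+1},0)$. The third row $(c_n,\,N_n,\,1-c_n)$ becomes $(1-c_n,\,3N_n+1,\,c_n)$.

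It remains to match these entries with the claimed form at index $n+1$, using two identities:
\begin{itemize}
\item $c_{n+1}=1-c_n$, since $c_n=n \bmod 2$ alternates;
\item $N_{n+1}=3N_n+1$, which follows from $\tfrac12(3^{n+1}-1)=3\cdot\tfrac12(3^n-1)+1$.
\end{itemize}
With these, the first row is $(1-c_{n+1},N_{n+1},c_{n+1})$, the middle row is $(0,3^{n+1},0)$, and the third row is $(c_{n+1},N_{n+1},1-c_{n+1})$. This completes the induction.

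There is no real obstacle; the only care needed is in the middle column. There the $c_n$ terms cancel, leaving $3N_n+1$ independent of parity. As a cross-check on the structure, the corner $2\times2$ block (rows and columns $0$ and $2$) evolves by the permutation $\bigl(\begin{smallmatrix}0&1\\1&0\end{smallmatrix}\bigr)$, which explains the parity pattern. The middle column satisfies $x_{n+1}=3x_n+1$, which explains the appearance of $N_n$.
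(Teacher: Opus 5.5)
Your proof is correct and follows the paper's approach: induction on $n$ via $\mathbf{B}^{n+1}=\mathbf{B}^n\mathbf{B}$, using $N_{n+1}=3N_n+1$ and $N_0=0$, together with the parity flip $c_{n+1}=1-c_n$, which the paper leaves implicit. Your entrywise computation, including the cancellation of the $c_n$ terms in the middle column, checks out.
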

\noindent The {\em proof} is by induction using $N_{n+1}=3N_n+1$, $N_0=0$.

Note that $N_{n+1}-N_n =3^n$, generating the sequence \seqnum{A000244}.
An immediate application of Proposition~\ref{prop:forbidden-recurrence} is
 
\begin{corollary}[LTH with unit weights]
\label{thm:forbidden-linear}
Assume that $w_{n,0}=1=w_{n,2}$ and $w_{n,1}=\infty$ for all $n$,  
then
$$
d_{n,0}=d_{n,2}=N_n,
\qquad
d_{n,1} = 2N_n = 3^n - 1;
$$
the latter sequence is \seqnum{A024023}.
\end{corollary}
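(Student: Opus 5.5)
We plan to specialize the iterated solution \eqref{eq:iteration_forbidden} of Proposition~\ref{prop:forbidden-recurrence} to constant weights and then evaluate it with the explicit powers from Proposition~\ref{prop:Bn-closed}. With $w_{n,0}=1=w_{n,2}$ and $w_{n,1}=\infty$ we get $d_0=(0,0,0)^{\tt}$ and $v_n=v:=(1,2,1)^{\tt}$ for all $n$. Hence
$$
d_n=\sum_{\nu=0}^{n-1}\mathbf{B}^{\nu}v .
$$

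Next we compute $\mathbf{B}^\nu v$ from the closed form of $\mathbf{B}^\nu$. The middle component is $2\cdot 3^\nu$. The first and third components are both $(1-c_\nu)+2N_\nu+c_\nu=1+2N_\nu=3^\nu$. So
$$
\mathbf{B}^\nu v=(3^\nu,\,2\cdot3^\nu,\,3^\nu)^{\tt}.
$$
Summing the geometric series $\sum_{\nu=0}^{n-1}3^\nu=\tfrac12(3^n-1)=N_n$ gives
$$
d_{n,0}=d_{n,2}=N_n,\qquad d_{n,1}=2N_n=3^n-1 .
$$

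As a cross-check (or as an alternative route), we can verify the claim directly by induction from the scalar recurrences in the proof of Proposition~\ref{prop:forbidden-recurrence}:
$$
d_{n+1,1}=3d_{n,1}+2,\qquad d_{n+1,0}=d_{n,1}+d_{n,2}+1.
$$
These reduce to $3^{n+1}-1=3(3^n-1)+2$ and $N_{n+1}=2N_n+N_n+1=3N_n+1$.

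There is no real obstacle here. The only point needing care is noting that, in the first and third components, the parity terms $1-c_\nu$ and $c_\nu$ add up to $1$ regardless of parity. Optimality needs no extra argument, since Proposition~\ref{prop:forbidden-recurrence} already describes the forced optimal structure.
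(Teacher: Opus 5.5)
Your proposal is correct and matches the paper's proof: substitute $d_0=0$ and $v_n=(1,2,1)^{\mathrm t}$ into the iterated formula and use the closed form of $\mathbf{B}^\nu$. This gives summands $(2N_\nu+1,\,2\cdot 3^\nu,\,2N_\nu+1)^{\mathrm t}$, which sum to $N_n(1,2,1)^{\mathrm t}$; your simplification $2N_\nu+1=3^\nu$ and the inductive cross-check via $N_{n+1}=3N_n+1$ are harmless extras.
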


\begin{proof}
By equation~\eqref{eq:iteration_forbidden} (with $d_0=0$ and $v_n=(1,2,1)^\mathrm{t}$) and Proposition~\ref{prop:Bn-closed},
\begin{equation*}
d_n=\sum_{\nu=0}^{n-1} (2N_\nu+1, 2\cdot 3^\nu, 2N_\nu+1)^\mathrm{t}=N_n(1,2,1)^\mathrm{t}. \tag*{\qedhere}
\end{equation*}
\end{proof}

Even more interesting is the following case.
 
\begin{corollary}[LTH with massive discs]
\label{thm:massive-linear}
Let $w_{n,0}=n+1=w_{n,2}$ and $w_{n,1}=\infty$.  
Then
$$
d_{n,1}
= 2 \sum_{\nu=0}^{n}N_\nu,
\qquad
d_{n,0}=d_{n,2}
= \sum_{\nu=0}^{n}N_\nu;
$$
the latter number is \seqnum{A000340}$(n-1)$.
\end{corollary}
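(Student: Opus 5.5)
We apply the iteration \eqref{eq:iteration_forbidden} with $d_0=(0,0,0)^{\mathrm t}$ and $v_m=(m+1)(1,2,1)^{\mathrm t}$. This gives
$$
d_n=\sum_{\nu=0}^{n-1}\mathbf{B}^{\nu}v_{n-1-\nu}=\sum_{\nu=0}^{n-1}(n-\nu)\,\mathbf{B}^{\nu}(1,2,1)^{\mathrm t}.
$$
The vector $\mathbf{B}^\nu(1,2,1)^{\mathrm t}$ was already computed in the proof of Corollary~\ref{thm:forbidden-linear} using Proposition~\ref{prop:Bn-closed}. It equals $(2N_\nu+1,\,2\cdot3^\nu,\,2N_\nu+1)^{\mathrm t}=(3^\nu,\,2\cdot 3^\nu,\,3^\nu)^{\mathrm t}$, since $2N_\nu+1=3^\nu$. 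Hence
$$
d_n=\Bigl(\sum_{\nu=0}^{n-1}(n-\nu)3^\nu\Bigr)(1,2,1)^{\mathrm t}.
$$
This already yields $d_{n,1}=2d_{n,0}=2d_{n,2}$.

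It remains to prove the identity $\sum_{\nu=0}^{n-1}(n-\nu)3^\nu=\sum_{\nu=0}^{n}N_\nu$. We would do this by exchanging summation order. Write $n-\nu=\sum_{\mu=\nu+1}^{n}1$; then the sum becomes $\sum_{\mu=1}^{n}\sum_{\nu=0}^{\mu-1}3^\nu=\sum_{\mu=1}^n N_\mu$, because $\sum_{\nu=0}^{\mu-1}3^\nu=N_\mu$ (equivalently $N_{\mu}-N_{\mu-1}=3^{\mu-1}$). Since $N_0=0$, this is $\sum_{\nu=0}^nN_\nu$. As a cross-check, we would also verify by induction on $n$ that both sides satisfy $s_{n+1}=s_n+N_{n+1}$ with $s_0=0$. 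For the left-hand side, $s_{n+1}-s_n=\sum_{\nu=0}^{n}3^\nu=N_{n+1}$. The identification with \seqnum{A000340}$(n-1)$ then follows from the closed form $\sum_{\nu=0}^n N_\nu=\frac14\bigl(3^{n+1}-2n-3\bigr)$, obtained by summing the geometric series.

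A subtle point, and the main thing to get right, is that Proposition~\ref{prop:forbidden-recurrence} must genuinely hold for these weights. In the LTH, the idle-peg structure forces the largest disc to move twice when $k=1$ and once otherwise. Optimality therefore holds in each case by the decomposition in that proof, which does not depend on the specific weights beyond $w_{n,1}=\infty$. So no comparison between one-LDM and two-LDM branches is needed here, unlike in Proposition~\ref{prop:massive-cheap-peg}. I would nevertheless double-check the $k=0$ case. When moving from peg~1 to peg~2 (idle peg~0), could the largest disc prefer two moves? Its only route to peg~2 is the single allowed move $1\to2$, and revisiting a peg is never optimal. Hence exactly one move occurs, and the recurrence is exact.
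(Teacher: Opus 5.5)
Your proposal is correct and follows the paper's proof essentially verbatim. You iterate \eqref{eq:iteration_forbidden} with $d_0=0$ and $v_n=(n+1)(1,2,1)^{\mathrm t}$, use Proposition~\ref{prop:Bn-closed} to get $\mathbf{B}^\nu(1,2,1)^{\mathrm t}=3^\nu(1,2,1)^{\mathrm t}$, and reduce everything to $\sum_{\nu=0}^{n-1}(n-\nu)3^\nu=\sum_{\nu=0}^{n}N_\nu$. The only difference is cosmetic: you prove this identity by interchanging the order of summation (checked by induction), whereas the paper just cites induction. Your remark that Proposition~\ref{prop:forbidden-recurrence} holds for any weights with $w_{n,1}=\infty$ matches how the paper uses it.
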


\begin{proof}
By equation~\eqref{eq:iteration_forbidden} (with $d_0=0$ and $v_n=(n+1)(1,2,1)^\mathrm{t}$) and Proposition~\ref{prop:Bn-closed},
\begin{equation*}
d_n=\sum_{\nu=0}^{n-1} (n-\nu)3^\nu   (1,2,1)^\mathrm{t} =\sum_{\nu=0}^n N_\nu   (1,2,1)^\mathrm{t},
\end{equation*}
the latter by induction (cf.~\eqref{eq:massive-unified-sum}).
\end{proof}

\subsection{Other Weight Models}\label{sec:observations}

In this section we analyse models where two pegs have cost $1$ but one peg has cost
$w$. Unlike previous uniform cases, the optimal strategy may switch depending on the
relative size of $w$. This gives rise to a threshold behavior that, as far as we know,
has not been documented in the Hanoi literature.
 
\subsubsection{Constant Nonuniform Weights}
\label{subsec:constweights}

Assume that
\begin{equation}\label{eq:asymmetric_weights}
w_{n,0}=1=w_{n,2},\qquad w_{n,1}=w\in\mathbb{N}_0,
\end{equation}
so that moves with idle peg~1 are more (or less) expensive depending on~$w$.

The following threshold values for $w$ play a decisive role in the analysis of this setting.
Define the sequence $(a_m)_{m\in\mathbb{N}_0}$ by
$$
a_0=0,\qquad a_{m+1}=2\cdot3^m,
$$
so that
$
a_m=0,2,6,18,54,162,\ldots\quad(\text{\seqnum{A008776}}(m-1)).
$

\noindent Numerical experiments suggested that if $a_m\leq w<a_{m+1}$, then (only) for  $n\leq m$ the largest disc moves twice in an optimal solution. This corresponds to the following recurrence relations:
\begin{align}
d_{n+1,0} &= d_{n,0}+d_{n,1}+1,   &\textrm{if }  n\ge0,\label{eq:obs-0}\\
d_{n+1,1} &= 3d_{n,1}+2,          &\textrm{if }  0\le n<m,\label{eq:obs-pre}\\
d_{n+1,1} &= 2d_{n,0}+w,          &\textrm{if }   n\ge m.\label{eq:obs-post}
\end{align}
Moreover, $d_{n,2}=d_{n,0}$ for all $n$ and, as usual, $d_0=(0,0,0)^\mathtt{t}$.

We now show that this will indeed lead to the optimal solutions.
 
\begin{theorem}\label{thm:constant-weight-phase}
Let the weights fulfill \eqref{eq:asymmetric_weights} with 
$a_m\leq w<a_{m+1}$ for some $m\in\mathbb{N}_0$. 
Then
\begin{align}
d_n &= (N_n, 2N_n,N_n)^{\mathrm t}, &\text{if }  0\leq n\leq m,\label{eq:small-n}\\
d_n &=
\begin{pmatrix}
N_m J_{n-m+2}+\ell_{n-m}+w\ell_{n-m-1}\\[1mm]
2N_m J_{n-m+1}+2\ell_{n-m-1}+2w\ell_{n-m-2}+w\\[1mm]
N_m J_{n-m+2}+\ell_{n-m}+w\ell_{n-m-1}
\end{pmatrix}, &\text{if }  n>m.
\label{eq:large-n}
\end{align}
\end{theorem}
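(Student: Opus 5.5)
We prove the theorem by induction on $n$, proving the closed forms \eqref{eq:small-n}–\eqref{eq:large-n} together with the symmetry $d_{n,0}=d_{n,2}$. At each step we must say which branch of \eqref{eqn:minimum} in Theorem~\ref{thm:weighted-hanoi} attains the minimum. Under \eqref{eq:asymmetric_weights} the recurrence \eqref{eqn:minimum} reads
\begin{align*}
d_{n+1,0}&=\min\{d_{n,1}+d_{n,2}+1,\ 3d_{n,0}+w+1\},\\
d_{n+1,1}&=\min\{d_{n,0}+d_{n,2}+w,\ 3d_{n,1}+2\},
\end{align*}
and the formula for $d_{n+1,2}$ is the one for $d_{n+1,0}$ with pegs $0$ and $2$ interchanged. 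The symmetry $d_{n,0}=d_{n,2}$ therefore propagates trivially from $d_0=(0,0,0)^{\mathrm t}$. Assuming this symmetry, the two comparisons become the following.
\begin{itemize}
\item Idle peg $0$: one-LDM is optimal iff $d_{n,1}\le 2d_{n,0}+w$.
\item Idle peg $1$: one-LDM is optimal iff $2d_{n,0}+w\le 3d_{n,1}+2$.
\end{itemize}

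\emph{Phase $0\le n\le m$.} We assume $d_n=(N_n,2N_n,N_n)^{\mathrm t}$.
\begin{itemize}
\item Idle peg $0$: the condition reads $2N_n\le 2N_n+w$, which always holds. So $d_{n+1,0}=3N_n+1=N_{n+1}$.
\item Idle peg $1$: the one-LDM value is $2N_n+w$ and the two-LDM value is $6N_n+2=2N_{n+1}$. Their difference is $w-(4N_n+2)=w-2\cdot3^n=w-a_{n+1}$.
\begin{itemize}
\item For $n<m$ we have $w\ge a_m\ge a_{n+1}$, so two-LDM is (weakly) optimal and $d_{n+1,1}=2N_{n+1}$. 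This establishes \eqref{eq:small-n} and the regime \eqref{eq:obs-pre}.
\item For $n=m$ the hypothesis $w<a_{m+1}$ makes one-LDM strictly optimal, giving $d_{m+1,1}=2N_m+w$.
\end{itemize}
\end{itemize}
Consequently $d_{m+1}=(N_{m+1},\,2N_m+w,\,N_{m+1})^{\mathrm t}$.

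\emph{Phase $n>m$.} The key observation is that the idle-peg-$1$ condition at step $n$ reduces to the idle-peg-$0$ condition at step $n-1$. If $d_{n,1}=2d_{n-1,0}+w$ and $d_{n,0}=d_{n-1,0}+d_{n-1,1}+1$, then substituting gives
\[
2d_{n,0}+w\le 3d_{n,1}+2 \iff d_{n-1,1}\le 2d_{n-1,0}+w .
\]
The right-hand inequality was already established at step $n-1$ (in phase one it reads $2N\le2N+w$).

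The idle-peg-$0$ condition $d_{n,1}\le 2d_{n,0}+w$ itself propagates as follows. In phase two $d_{n,1}=2d_{n-1,0}+w$, and $d_{n,0}\ge d_{n-1,0}$ because $d_{n,0}=d_{n-1,0}+d_{n-1,1}+1$. So $d_{n,1}\le 2d_{n,0}+w$ follows immediately.

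Hence for $n\ge m$ the one-LDM branch is optimal for every idle peg, and $d$ obeys the linear recurrences \eqref{eq:obs-0} and \eqref{eq:obs-post}. These are the same as \eqref{eq:matrix-rec} with $w_n=(1,w,1)^{\mathrm t}$, started at $d_m$.

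\emph{Closed form for $n>m$.} It remains to check that the vector in \eqref{eq:large-n} satisfies
\begin{align*}
x_{n+1}&=x_n+y_n+1,\\
y_{n+1}&=2x_n+w,
\end{align*}
with the correct initial value at $n=m+1$. We use the conventions $\ell_{-1}=0$, $\ell_0=J_0=0$, $\ell_1=1$, $J_2=1$, $J_3=3$. At $n=m+1$ the first coordinate is $3N_m+1=N_{m+1}$ and the second is $2N_m+w$, matching the values computed above. For the induction step we substitute the formulas and use
\begin{itemize}
\item $J_{k+2}=J_{k+1}+2J_k$ for the $N_m$-terms;
\item $\ell_{k+2}=\ell_{k+1}+2\ell_k+1$ (i.e.\ (L.1) of \cite{Hinz_2017}) for the constant and $w$-terms.
\end{itemize}
The extra constant in $y$ combines with the $+1$ and the $+w$ correctly.

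The main obstacle is bookkeeping rather than ideas. One part is making sure the branch comparison at the transition index $n=m$ uses exactly the hypothesis $a_m\le w<a_{m+1}$, with ties at $w=a_{n+1}$ harmless because the minimum value is unchanged. The other part is the index shifts of $\ell$ at $n=m+1,m+2$, where negative indices appear. I would handle these two small cases by direct evaluation before invoking the general recurrence identities.
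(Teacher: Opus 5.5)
Your proof is correct. It shares the paper's skeleton: fix the branch pattern, solve the resulting linear system, and check the branch inequalities. However, you carry out both halves differently.

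\textbf{Closed form.} The paper derives it constructively. It reduces to $(d_{n,0},d_{n,1})$ and sums powers of the $2\times 2$ matrices $\begin{pmatrix}1&1\\0&3\end{pmatrix}$ and $\begin{pmatrix}1&1\\2&0\end{pmatrix}$, whose powers involve $N_n$, $3^n$ and Jacobsthal numbers. You instead take \eqref{eq:large-n} as given and verify it by substitution, using $J_{k+2}=J_{k+1}+2J_k$, $\ell_{k+2}=\ell_{k+1}+2\ell_k+1$ and $\ell_{-1}=0$.

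\textbf{Branch optimality.} The paper only asserts that \eqref{eq:minimum_short} and its idle-peg-1 analogues hold (``which is indeed the case'', ``with a little effort''). You settle the branch choice inside the induction:
\begin{itemize}
\item In the first phase you compute the exact gap $w-a_{n+1}$ between the two idle-peg-1 branches. This explains why the thresholds are $2\cdot 3^m$.
\item In the second phase you observe that the idle-peg-1 inequality at step $n$ is equivalent to the idle-peg-0 inequality at step $n-1$. Together with $d_{n,0}\ge d_{n-1,0}$, this shows that once one-LDM is used for both idle pegs at some step, it stays optimal at all later steps.
\end{itemize}
This argument never needs the closed forms to be plugged into inequalities.

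\textbf{Trade-off.} The matrix route produces the formula without knowing it in advance and links it to the Jacobsthal structure of the one-LDM operator. Your route requires the formula up front, but it supplies exactly the optimality verification the paper leaves to the reader.
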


\begin{proof} We begin by solving the system of recurrences \eqref{eq:obs-0}, \eqref{eq:obs-pre}, \eqref{eq:obs-post} with $d_0=(0,0,0)^{\mathrm t}$.
By symmetry, $d_{n,0}=d_{n,2}$ for all $n$, so the system
reduces for $d_n=(d_{n,0},d_{n,1})$ and if $n<m$ to 
\begin{equation}\label{eq:matrix_small-n}
d_{n+1}=\mathbf{B}d_n+(1,2)^{\mathrm t}
\end{equation}
with $\mathbf{B}^n=\begin{pmatrix}
1 & N_n\\
0 & 3^n
\end{pmatrix}$
and if $n\geq m$ to
\begin{equation}\label{eq:matrix_large-n}
d_{n+1}=\mathbf{A}d_n+(1,w)^{\mathrm t}
\end{equation}
with $\mathbf{A}^n=
\begin{pmatrix}
J_{n+1} & J_n\\
2J_n & 2J_{n-1}
\end{pmatrix}$ (for $n\geq 1$).
For $n\leq m$ we get from \eqref{eq:matrix_small-n} with initial vector $d_0=(0,0)^{\mathrm t}$:
$$
d_n=\sum_{\nu=0}^{n-1} \mathbf{B}^\nu  (1,2)^{\mathrm t}
$$
and deduce \eqref{eq:small-n}. For $n>m$ we get from \eqref{eq:matrix_large-n} with initial vector $d_m$:
$$
d_n=\mathbf{A}^{n-m}(N_m,2N_m)^{\mathrm t}+\sum_{\nu=0}^{n-m-1} \mathbf{A}^\nu  (1,w)^{\mathrm t}
$$
and deduce \eqref{eq:large-n} after some calculation.

In order to show that \eqref{eq:small-n} and \eqref{eq:large-n} are indeed a solution for \eqref{eqn:minimum}, we have to verify that
\begin{equation}\label{eq:minimum_short}
\forall\;n\in\mathbb{N}_0:\;\min\{d_{n,1},2d_{n,0}+w\}=d_{n,1} ,
\end{equation}
which is indeed the case.

For $d_{n,1}$ we have to and can show with a little effort that
$$
\forall\;n<m:\;\min\{2d_{n,0}+w,3d_{n,1}+2\}=3d_{n,1}+2 ,
$$
and
\begin{equation*}
\forall\;n\geq m:\;\min\{2d_{n,0}+w,3d_{n,1}+2\}=2d_{n,0}+w . \tag*{\qedhere}
\end{equation*}
\end{proof}

Note that
the two expressions in the brackets are only equal if
\begin{itemize}
\item $w=a_0=0(=m)$ and $n=1$,
\item $w=a_{m+1}$ and $n=m$.
\end{itemize}
We therefore have the following situations for an $(n+1)$-tower to be transferred optimally from peg~$0$ to peg~$2$:
\begin{itemize}
\item if $w=0$, then the largest disc is moved once if $n\neq 1$ and once or twice\footnote{``once or twice'' means that there is an optimal solution with one LDM and an optimal solution with two LDMs} otherwise;
\item if $a_m<w<a_{m+1}$, then the largest disc moves twice for $n\leq m-1$ and once for $n\geq m$;
\item if $w=a_{m+1}$, then the largest disc moves twice for $n\leq m-1$, once or twice for $n=m$, and once for $n\geq m+1$.
\end{itemize}

Equality of the two expressions in the brackets in \eqref{eq:minimum_short}, i.e.~for the transfer of an $(n+1)$-tower from peg~$1$ to peg~$2$, say, can only occur if $w=0=n$, such that only for the 1-tower (the only) disc~1 may move once or twice and otherwise the largest disc will always move once only.

 Note that the results of this section are compatible with the above considerations on forbidden moves if you let $w$ tend to infinity.

\subsubsection{Consecutive Weights}
\label{subsec:conweights}

 We now consider the case $w_{n,k}=w+k$ for some $w\in\mathbb{N}_0$ and all $k\in T$. Then
\begin{eqnarray}
d_{n+1,0} & = & \min\{d_{n,1}+d_{n,2},3d_{n,0}+w+3\}+w ,\label{eqn:13}\\
d_{n+1,1} & = & \min\{d_{n,0}+d_{n,2},3d_{n,1}+w+1\}+w+1 ,\label{eqn:14}\\
d_{n+1,2} & = & \min\{d_{n,0}+d_{n,1},3d_{n,2}+w-1\}+w+2 .\label{eqn:15}
\end{eqnarray}
For $w=0$, this reduces to
\begin{eqnarray}
d_{n+1,0} & = & \min\{d_{n,1}+d_{n,2},3d_{n,0}+3\} ,\label{eqn:16}\\
d_{n+1,1} & = & \min\{d_{n,0}+d_{n,2},3d_{n,1}+1\}+1 ,\label{eqn:17}\\
d_{n+1,2} & = & \min\{d_{n,0}+d_{n,1},3d_{n,2}-1\}+2 .\label{eqn:18}
\end{eqnarray}
For $n=0$ this leads to $d_1=(0,1,1)^{\rm t}$, where in \eqref{eqn:18} the second term in the brackets is the minimum. Assuming for the moment that for $n\geq 1$ the minimum in \eqref{eqn:16}, \eqref{eqn:17} and \eqref{eqn:18} is always attained for the first term in the brackets, we obtain
 $$d_{n+1}=\mathbf{A}^{n}d_1+\sum_{\nu=0}^{n-1}\mathbf{A}^{\nu}(0,1,2)^{\rm t}=\mathbf{J_{n}}d_1+(-1)^{n}d_1+\mathbf{L_{n-1}}(0,1,2)^{\rm t}+c_{n}(0,1,2)^{\rm t} ,$$ 
where $\mathbf{A}$ is given, as in Section~\ref{sec:MatrixformulationOneLdm}, by $a_{ij}=1$, $a_{kk}=0$ and $\mathbf{L_{n-1}}$ is the constant $3\times 3$-matrix with element $\ell_{n-1}$. 

The sequences defined by $x_n=d_{n+1,0}$, $y_n=d_{n+1,1}$ and $z_n=d_{n+1,2}$ have interesting properties. We have $x_n=2J_n+3\ell_{n-1}=2\ell_n+\ell_{n-1}=\ell_n+M_n=0,2,5,12,25,52,\ldots$ . This sequence is not in the OEIS. The sequence $y$ is \seqnum{A084170} and fulfills $y_n=x_n+1-c_n=\ell_n+p_{n+1}=\ell_{n+1}+\ell_{n-1}=1,2,6,12,26,52,\ldots$ with the Purkiss sequence $p$ ($p_n=$\seqnum{A051049}$(n-1)$). 
Finally, $z_n=x_n+1=\ell_n+2^n=1,3,6,13,26,53,\ldots$ is \seqnum{A081254}$(n+1)$. It fulfills the recurrence $z_0=1$, $z_{n+1}=2z_n+c_n$, i.e.~the recurrence relation of $\ell$, but with the seed 1 instead of 0. It can easily be checked that the $d_n$ actually fulfill \eqref{eqn:16}, \eqref{eqn:17} and \eqref{eqn:18}.

Now let $w=1$. Numerical evidence shows that we may assume that the first entries in the brackets in \eqref{eqn:13}, \eqref{eqn:14} and \eqref{eqn:15} lead to the respective minima. As before we then get $d_n=(\mathbf{L}_{n-1}+c_n)(1,2,3)^{{\rm t}}$. The sequences $d_{n,0}=6\ell_{n-1}+c_n=M_n+3\ell_{n-1}=0,1,6,13,30,61,126,\ldots$, $d_{n,1}=6\ell_{n-1}+2c_n=2M_n=0,2,6,14,30,62,126,\ldots$, and $d_{n,2}=6\ell_{n-1}+3c_n=3\ell_{n}=0,3,6,15,30,63,126,\ldots$ are  \seqnum{A101622}$(n)$, \seqnum{A000918}$(n+1)$, and \seqnum{A141023}$(n+1)$, respectively. Again it can easily be checked that $d_n$ actually fulfill \eqref{eqn:13}, \eqref{eqn:14} and \eqref{eqn:15}. 

For $w\geq 2$ we only have to add $(w-1)M_n$ to the elements of each sequence, because $M_n$ is the number of disc moves in every optimal solution, and each weight has been augmented by $w-1$.

\section{Outlook}\label{sec:outlook}

The one-LDM regime has turned out to be canonical in most cases of weights throughout this paper. An exception occurred when a move type was forbidden (Section~\ref{sec:forbidden}), where two LDMs were necessary for any number of discs. For an intermediate case, with the weight constant with respect to discs but more important
on one move type than on the others, we observed a phase transition from two-LDM for small numbers of discs to the one-LDM strategy eventually taking over, with the possible mix of the two for specific threshold values of the weight (Section~\ref{subsec:constweights}).

If one allows weights for disc~$n+1$ to grow faster than $3^n$, however, then the opposite transition may occur. This requires highly unbalanced and rapidly growing weights and is
therefore less natural from a modeling perspective.

\begin{example} [Transition from one-LDM to two-LDM]
Let $w_{n,0}=1=w_{n,2}$ and $w_{n,1}=4^n$. Then, for $n\geq 1$, $d_{n,0}=3^{n-1}=d_{n,2}$ (\seqnum{A140429}) and $d_{n,1}=2\cdot 3^{n-1}-1=2a_n-1$ (\seqnum{A048473}$(n-1)$). For all $n$ the solution to transfer an $n$-tower from peg~$1$ to peg~$2$, say, uses one LDM, while for the transfer from $0$ to $2$ this is only so for $n=1$, otherwise two LDMs are necessary.
\end{example}
This can easily be verified from the standard recurrence which here takes the form
\begin{align*}
d_{n+1,0} &= \min\{d_{n,0}+d_{n,1}+1,3d_{n,0}+1+4^n\}\\
          &= \min\{d_{n,1},2d_{n,0}+4^n\}+d_{n,0}+1,\\
d_{n+1,1} &= \min\{2d_{n,0}+4^n,3d_{n,1}+2\}.
\end{align*}

This shows that it would be desirable to find conditions for the weights which will allow to decide on the transition behavior. 

Since our results further reveal that the weighted Tower of Hanoi acts as a
powerful sequence-transformation mechanism, converting classical sequences into new
ones with predictable algebraic structure, more such sequences might be discovered in future investigations.
Another fertile source for integer sequences is the quest for the number of optimal solutions. Whereas in the classical Tower of Hanoi (Example~\ref{ex:classical}) the minimal solution  is unique, it has been shown in \cite[Theorem~2]{Hinz_2025} that for the situation of Example~\ref{ex:fried} there are $2^{J_n}$ shortest solutions to move an $n+1$-tower from peg~$0$ to peg~$1$ and $2^{\widetilde{J}_n}$ for the transfer between pegs~$0$ and $2$.
Numerical experiments and mathematical analysis could also be employed as a rich
algebraic laboratory connecting combinatorics, number theory, and algorithmic dynamics and to
allow for further settings, such as, e.g., negative weights. In any case, our study has demonstrated that challenges from the traditional Tower of Hanoi puzzle do not run out!


\begin{thebibliography}{99}

\bibitem{Aumann_2014}
Aumann, S., G\"otz, K. A. M., Hinz, A. M., \& Petr, C. (2014).
The number of moves of the largest disc in shortest paths on Hanoi graphs.
\emph{The Electronic Journal of Combinatorics, 21}(4), Article P4.38.
\url{https://doi.org/10.37236/4252}

\bibitem{Fried_2025}
Fried, S. (2025).
Solving the Tower of Hanoi puzzle with heavy disks.
\emph{The American Mathematical Monthly, 132}(8), 806.
\url{https://doi.org/10.1080/00029890.2025.2491975}

\bibitem{Hinz_1992}
Hinz, A. M. (1992).
Shortest paths between regular states of the Tower of Hanoi.
\emph{Information Sciences, 63}(1--2), 173--181.
\url{https://doi.org/10.1016/0020-0255(92)90067-I}

\bibitem{Hinz_2017}
Hinz, A. M. (2017).
The Lichtenberg sequence.
\emph{The Fibonacci Quarterly, 55}(1), 2--12.
\url{https://doi.org/10.1080/00150517.2017.12427786}

\bibitem{Hinz_2018}
Hinz, A. M., Klav\v{z}ar, S., \& Petr, C. (2018).
\emph{The Tower of Hanoi: Myths and Maths} (2nd ed.).
Birkh\"auser.
\url{https://doi.org/10.1007/978-3-319-73779-9}

\bibitem{Hinz_2025}
Hinz, A. M., \& Parisse, D. (2025).
The Tower of Hanoi with heavy discs and some integer sequences.
\emph{The Fibonacci Quarterly, 63}(4), 711--716.
\url{https://doi.org/10.1080/00150517.2025.2481593}

\bibitem{Lucas_1883}
Lucas, {\'E}. (1883).
\emph{R\'ecr\'eations math\'ematiques} (Vol.~III).
Gauthier-Villars et fils.

\bibitem{Mehiri_2024}
Mehiri, E.-M., \& Belbachir, H. (2024).
The weighted Tower of Hanoi.
\emph{Discrete Mathematics, Algorithms and Applications, 16}(05), Article 2350051.
\url{https://doi.org/10.1142/S1793830923500519}

\bibitem{OEIS}
OEIS Foundation Inc. (2026).
\emph{The On-Line Encyclopedia of Integer Sequences}.
\url{https://oeis.org}

\bibitem{Romik_2006}
Romik, D. (2006).
Shortest paths in the Tower of Hanoi graph and finite automata.
\emph{SIAM Journal on Discrete Mathematics, 20}(3), 610--622.
\url{https://doi.org/10.1137/050628660}

\bibitem{Sapir_2004}
Sapir, A. (2004).
The Tower of Hanoi with forbidden moves.
\emph{The Computer Journal, 47}(1), 20--24.
\url{https://doi.org/10.1093/comjnl/47.1.20}

\end{thebibliography}
\end{document}